  \theoremstyle{plain}
  \newtheorem{theorem}{Theorem}
  \newtheorem{lemma}{Lemma}
  \newtheorem{corollary}{Corollary}
  \newtheorem{assumption}{Assumption}
  \newtheorem{definition}{Definition}
  \theoremstyle{remark}
  \newtheorem*{remark}{Remark}
\begin{document}

\title{Decentralized Stochastic Linear-Quadratic Optimal Control with Risk Constraint and Partial Observation}
\author{Hui Jia \and Yuan-Hua Ni}
\date{\today}

\maketitle

\begin{abstract}
This paper addresses a risk-constrained decentralized stochastic linear-quadratic optimal control problem with one remote controller and one local controller, where the risk constraint is posed on the cumulative state weighted variance in order to  reduce the oscillation of system trajectory. 
In this model, local controller can only partially observe the system state, and sends the estimate of state to remote controller through an unreliable channel, 
whereas the channel from remote controller to  local controllers is perfect. 
For the considered constrained optimization problem, we first  punish the risk constraint into cost function through Lagrange multiplier method, and the resulting augmented cost function will include a quadratic mean-field term of state. 
In the sequel, for any but fixed multiplier, explicit solutions to finite-horizon and infinite-horizon mean-field decentralized  linear-quadratic problems are derived together with necessary and sufficient condition on the mean-square stability of optimal system.
Then, approach to find the optimal Lagrange multiplier is presented based on bisection method. Finally, two numerical examples are given to show the efficiency of the obtained results.

\textbf{Keywords:} risk constraint, decentralized control, optimal control,  partial observation
\end{abstract}

\section{Introduction}\label{sec:introduction}

{Achieving good average performance is often the goal of optimal control, especially in modern networked control systems (NCSs), such as the automated highway systems (\cite{Horowitz00}), unmanned aerial vehiclesn (\cite{Chen22}), electronic systems (\cite{Gee10}) and manufacturing systems (\cite{Ding-yi}).
However, unlikely, atypical or unexpected events may lead to catastrophic consequences; for example, unmanned aerial vehicle deviates too much from the given trajectory in hostile environments, or autonomous vehicle hits a wall or a pedestrian. In this case, only optimizing the total expected
cost may not be enough, and risk constraints should be included
in the optimization process as objectives or restrictions.}


Various risk measures are reported in existing literature, such as the risk-sensitive criterion (\cite{Jacobson73,Whittle90}) and conditional value-at-risk (CVaR) (\cite{RTyrrell00}).
	%
Risk-sensitive control, in that the quadratic cost function of  standard linear-quadratic-Gaussian (LQG) treatment is replaced by the exponential of a quadratic, gives the so-called linear exponential quadratic Gaussian (LEQG) formulation. 
If the noise covariance is large enough, the optimal controller of LEQG will no longer exist; this differs from the LQG setting.   
%
%
CVaR is a risk measure in optimizing or hedging financial instrument portfolios that quantifies the amount of tail risk an investment portfolio has. 
CVaR is derived by taking a weighted average of the ``extreme" losses in the tail of distribution of possible returns, beyond the value at risk cutoff point.


%
Unlike the above mentioned ones, \cite{tsiamis21} proposes a new risk measure for classical linear-quadratic (LQ) problem,  which deals with a LQ problem with general noises and partial observation.
In \cite{tsiamis21}, the adopted risk measure is state's  cumulative expected predictive (conditional) variance, and the constraint is  posed by letting this risk measure be smaller than a given level; this is to restrain the occurrence of the phenomenon that system state can grow arbitrarily large under less probable, yet extreme events. 
%
Note that the constraint is on state's fourth-order moment with some conditional expectations; by exploring some particular structure, the risk measure reduces to a quadratic function of state's estimation that is parameterized by some higher-order moments of prediction error. Then, the considered risk-constrained LQ  problem is equivalently expressed as a sequential
variational quadratically constrained quadratic programming (QCQP). By using the Lagrangian duality theory, the explicit expression of optimal risk-aware controller is obtained for an arbitrary but fixed Lagrange multiplier. Finally, an optimal Lagrange multiplier may be efficiently discovered via trivial bisection.

In this paper, we will consider a risk-constrained optimal control problem in NCSs. In fact, the study of networked optimal control has attracted much attention from system control community. 
In particular, \cite{Liang18} studies a networked optimal control problem with a local controller and a remote controller; local controller can perfectly observe the system state, and transmits the obtained state information to remote controller through an unreliable channel.
Then, \cite{Xiao20} generalizes the model of \cite{Liang18} to the case that local controller just accesses partially to the system state and that the exact values of state are still transmitted to remote controller yet. 
Simultaneously, \cite{Seyed19} considers a networked optimal control problem with $N$ local controllers and a remote controller, and same to \cite{Liang18} the system state is assumed to be available perfectly to each local controller (\cite{Seyed19}). 
Noting that the works here fail to care about the risk, \cite{tsiamis21} calls such problems the risk-neutral.

Namely, the mentioned works on networked optimal control are  generally minimizing the expectation of total cost. Yet mathematical expectation just reflects the average performance, and extreme cases  may still occur on the premise of small probability. Actually, in order to avoid reaching the extreme situation, it is natural to hope that the optimal state process will not be too sensitive to possible changes and one way to achieve this is trying to keep the variation of system state small (\cite{Yong13}).
Hence, the cumulative state weighted variance might be a proper risk measure to limit the statistical variability of system state, which is the concern of this paper.

This paper addresses a risk-constrained decentralized stochastic LQ problem with partial observation. The contributions and novelties are stated in what follows.


\begin{itemize}
	\item [(1)] The cumulative state weighted variance is adopted as a risk measure in this paper. Whereas the state's cumulative expected predictive (conditional) variance, some  fourth-order moment involving conditional expectation, is adopted by \cite{tsiamis21} that will reduce to a quadratic function of state's estimation parameterized by some higher-order moments of prediction error; and the augmented  cost function of unconstrained LQ problem will be similarly transformed.  
	
	By applying Lagrangian duality theory, the unconstrained (risk-aware) cost function of this paper will include some  mean-field term of state, and the resulting unconstrained  optimal control problem becomes a mean-field LQ problem with partial observation. Then, orthogonal decomposition and maximum principle techniques are employed to derive the risk-aware optimal controls for finite-horizon problem and infinite-horizon problem, respectively.  
	In contrast, \cite{tsiamis21} obtains her results by standard method of dynamic programming.

	Moreover, the model of \cite{tsiamis21} is centralized, whereas this paper handles risk-constrained decentralized optimal control.

	\item[(2)] Compared with existing mean-field stochastic optimal controls (\cite{Yong13}, \cite{Ni15} and \cite{Zhang16}) with perfect state observation, this paper handles the decentralized mean-field optimal control with partial observation.

	\item[(3)] For the finite-horizon problem with fixed Lagrange multiplier, necessary and sufficient condition on the existence of risk-aware optimal control are presented together with the closed-form expression. 
	
	For the infinite-horizon problem with fixed Lagrange multiplier, necessary condition on the stabilization in the mean-square sense is presented firstly for the controlled system  without additive noises. Then, necessary and sufficient condition on the boundedness of optimal state in the mean-square sense is characterized for the controlled  system with  additive noises.
	
	In addition,  using the bisection method (similarly to \cite{tsiamis21}), the optimal Lagrange multiplier is calculated. 
\end{itemize}	
%

The rest of this paper is organized as follows. Section 2 presents the considered remote-local decentralized optimal control problem with partial observation and risk constraint. In Section 3, the constrained optimization problem of Section 2 is transformed into a unconstrained one by using Lagrange duality theory. Section 4 gives the explicit expression of optimal control and study the stability problem for any fixed Lagrange multiplier. Section 5 gives the method for searching the optimal Lagrange multiplier. Two numerical examples are presented in Section 6, and Section 7 concludes this paper. 

\textit{\textbf{Notations:}}
Let  $\mathbb{P}(\cdot)$ be the probability measure and $\mathbb{E}(\cdot)$ be the mathematical expectation. For $a\leq b$, the collection of vectors $x_{a:b}$ is a short hand for $(x_a,x_{a+1},\cdots,x_b)$. $\mathbb{R}_{+}$ denotes the set of positive real numbers. $A \geq 0$ $(>0)$ denotes that $A$ is a positive semi-definite (positive definite) matrix. The transpose operation and inverse operation are denoted by $(\cdot)^{\prime}$ and $(\cdot)^{-1}$, respectively. 
$\mbox{tr}(A)$ denotes the trace of matrix A. Denotes diag($A$,$B$) as the diagonal block of matrices $A$, $B$. $I$ represents the unit matrix with appropriate dimensions, and $\sigma(x)$ denotes the $\sigma$-algebra generated by random variable $x$.

\section{Problem Formulation}

Consider a discrete-time plant, shown in Fig 1 below, with a local controller and a remote controller, which evolves according to some discrete-time stochastic difference equation
\begin{equation}\label{system}
	\begin{split}
		x_{k+1}=Ax_k+B^Lu^L_k+B^Ru^R_k+w_k.
	\end{split}
\end{equation}
Here, $x_k\in \mathbb{R}^n$, $u_k^L \in \mathbb{R}^{m_1}$, and $u_k^R \in \mathbb{R}^{m_2}$ are the state, local controller  and remote controller, respectively. 
The initial state $x_0$
is Gaussian with mean $\bar{x}_0$ and covariance $\Sigma_{init}$.
 $\{\omega_k\}$ is a sequence of independent and identically distributed (i.i.d)  Gaussian random variables with mean zero and covariance $Q_\omega$. At any time $k$, local controller makes a noisy observation and sends the state's estimator to the remote controller through an unreliable channel. Hence, the observation models are as follows:
\begin{align}\label{obervation-1}
	y_{k}^L&=Cx_k+v_k,\\
	y_{k}^R&=\eta_{k}\hat{x}_{k\mid k}^L, 
\end{align}
where $y_k^L$, $y^R_k\in \mathbb{R}^n$ are the observations of  local and remote controller with $\hat{x}_{k\mid k}^L$ the optimal estimator of state $x_k$ of local controller that is defined below. $\{v_k\}$ is a sequence of i.i.d. Gaussian random variables with mean zero and covariance  $Q_v$. $\eta_k$ is a Bernoulli random variable that describes the unreliable channel from local controller to remote controller.

\begin{figure}[!htb]
	\centering
	\includegraphics[width=0.6\hsize]{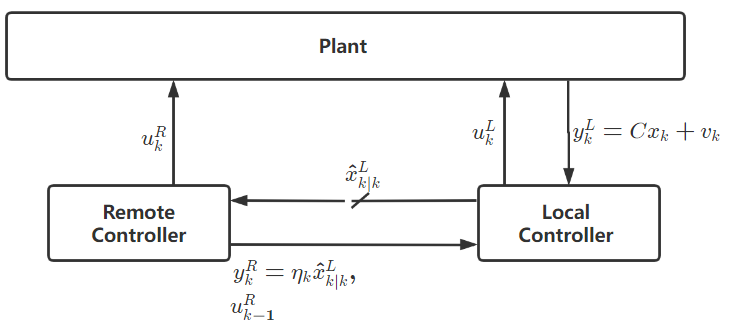}
	\caption{System model}
	\label{fig1}
\end{figure}

There are two types of communication channels in the model, namely, the unreliable uplink from local controller to remote controller, and the perfect downlink from remote controller to local controller. 
Through the unreliable channel, at time $k$ the local controller chooses to send $\hat{x}_{k\mid k}^L$ to the remote controller with channel failure probability $p$.
$\eta_k$ above describes the unreliable channel from local controller to remote controller, namely, $\eta_k=1$ indicates the successful transmission and the remote controller receives $\hat{x}_{k\mid k}^L$, and $\eta_k=0$ means the channel transmission failure and data loss. 
The channel from remote controller to local controller is perfect. Therefore, remote controller shares $y_k^R$ and $u_{k-1}^R$ to local controller at each time $k$. 

Let $\mathcal{F}^L_k=\sigma(y^L_0,\cdots,y^L_k,y^R_0,\cdots,y^R_k)$ and $\mathcal{F}^R_k=\sigma(y^R_0,\cdots,y^R_k)$. Based on this notation, we introduce the following admissible control set of $u=(u_{0:N}^L,u_{0:N}^R)$:
\begin{equation}
	\mathcal{U}_{ad}=\Big\{u\;\Big|\; u_k^L \; \mbox{is}\; \mathcal{F}^L_k\mbox{-measurable}, u_k^R \;\mbox{is}\; \mathcal{F}^R_k\mbox{-measurable},~~k=0,\cdots,N \Big\}. 	
\end{equation}
The cost function that is to be minimized is given by
\begin{align}\label{cost-1}
	J(u)=\mathbb{E} \Bigg\{\sum_{k=0}^{N}\Big{[}x_{k}^{\prime}Qx_k+(u_k^{L})^\prime R^Lu_k^L+(u_k^{R})^\prime R^Ru_k^R\Big{]}+x_{N+1}^{\prime}Gx_{N+1}\Bigg\},
\end{align}
where $Q$, $R^L$, $R^R$ and $G$ are positive semi-definite matrices. 

The decentralized LQG problem (\ref{system})-(\ref{cost-1}) is risk-neutral, since it optimizes the performance only on average. Still, even if the average performance is good, the state can grow arbitrarily large under less probable yet extreme events, when the variance of system noises is large. In other words, the state may exhibit large variability. To deal with this issue, we add a risk constraint on the state posed as
\begin{align}\label{constraint-0}
	J_R(u)=\mathbb{E}\Bigg\{\sum_{k=0}^{N+1}\big(x_k-\mathbb{E}x_k\big)^{\prime}Q\big(x_k-\mathbb{E}x_k\big)\Bigg\} \leq {\epsilon}.
\end{align}
Here, the adopted constraint  is the cumulative state weighted variance. By simply decreasing $\epsilon$, we increase the risk-awareness. Hence, our risk-constrained problem not only forces on decreasing the cost (\ref{cost-1}), but also explicitly restricts the variability of state. Therefore, the considered optimization problem is formulated as follows, which offers a way to trade-off between average performance and risk.

\textbf{Problem (CLQ).} Solve the optimization problem 
\begin{eqnarray}
	\begin{array}{cl}
		\min\limits_{u\in \mathcal{U}_{ad}}&J(u)=\mathbb{E} \Bigg\{\sum\limits_{k=0}^{N}\Big{[}x_{k}^{\prime}Qx_k+(u_k^{L})^\prime R^Lu_k^L+(u_k^{R})^\prime R^Ru_k^R\Big{]}+x_{N+1}^{\prime}Gx_{N+1}\Bigg\}\\[1mm]
		s.t.&J_R(u)=\mathbb{E}\Bigg\{\sum\limits_{k=0}^{N+1}\big(x_k-\mathbb{E}x_k\big)^{\prime}Q\big(x_k-\mathbb{E}x_k\big)\Bigg\} \leq {\epsilon},\\[1mm]
		&x_{k+1}=Ax_k+B^Lu_k^L+B^Ru_k^R+\omega_k, u\in \mathcal{U}_{ad},
	\end{array}
\end{eqnarray}
and the minimizer $u^*\in \mathcal{U}_{ad}$ is called an optimal risk-constrained control of Problem (CLQ). 


\section{Lagrangian Duality}

Define the Lagrange dual function of Problem (CLQ)
\begin{equation}
	\mathcal{L}(u,\mu)\triangleq J(u)+\mu(J_R(u)-{\epsilon}),
\end{equation}
where $\mathcal{L}: \mathcal{U}_{ad}\times \mathbb{R}_{+}\rightarrow \mathbb{R}$ and $\mu\geq 0$ is the Lagrangian multiplier. Define the dual function $D : \mathbb{R}_{+}\rightarrow \mathbb{R}$
\begin{equation}\label{dual}
	D(\mu)\triangleq \mathop{\inf}\limits_{u \in \mathcal{U}_{ad}} \,\mathcal{L}(u,\mu).
\end{equation}
%
%
%
Then, the dual problem of Problem (CLQ) is formulated as
\begin{equation}\label{dual-problem}
	\mathop{\sup}\limits_{\mu \geq 0}\,D(\mu)=\mathop{\sup}\limits_{\mu \geq 0} \mathop{\inf}\limits_{u \in \mathcal{U}_{ad}} \,\mathcal{L}(u,\mu).
\end{equation}


The following result states the optimal condition of Problem  (CLQ) according to the Lagrangian duality theory (Theorem 1 of  \cite{Tsiamis20}, and Theorem 4.10 of \cite{Andrzej06}).
\newtheorem{thm}{\bf Theorem}
\begin{theorem}\label{Optimality Conditions}
	Suppose that there exists a feasible control-multiplier pair $(u^{\ast},\mu^{\ast})\in \mathcal{U}_{ad} \times \mathbb{R}_+$ such that
	
	1) $\mathcal{L}(u^{\ast}(\mu^{\ast}),\mu^{\ast})=\min_{u \in \mathcal{U}_{ad}}\mathcal{L}(u,\mu^{\ast})=D(\mu^{\ast})$;
	
	2) $J_R(u^{\ast})\leq{\epsilon}$, i.e., {the dual risk} constraint of Problem (CLQ) is satisfied by control policy $u^{\ast}$;
	
	3) $\mu^{\ast}(J_R(u^{\ast})-{\epsilon})=0$, i.e., the complementary slackness holds.\\
	Then, $u^{\ast}$ is optimal for Problem (CLQ) and $\mu^{\ast}$ is optimal for the dual problem ($\ref{dual-problem}$), and further there exhibits zero duality gap, that is, $D^{\ast}=J^{\ast}$.
\end{theorem}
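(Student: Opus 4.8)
The plan is to run the classical saddle-point argument for constrained optimization, specialized to the particular pair $(u^{\ast},\mu^{\ast})$ furnished by the hypotheses. The whole proof rests on two ingredients: the elementary weak-duality inequality $D^{\ast}\le J^{\ast}$, where $J^{\ast}\triangleq\inf\{J(u):u\in\mathcal{U}_{ad},\ J_R(u)\le\epsilon\}$ is the optimal value of Problem (CLQ) and $D^{\ast}\triangleq\sup_{\mu\ge 0}D(\mu)$ is the optimal dual value; and the three stated conditions, which force the weak-duality chain to collapse into a string of equalities. Since the cited references (Theorem 1 of \cite{Tsiamis20} and Theorem 4.10 of \cite{Andrzej06}) already supply this duality framework, I would invoke them for the abstract setup and present the collapse-of-inequalities computation explicitly.

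First I would establish weak duality directly. For an arbitrary multiplier $\mu\ge 0$ and an arbitrary primal-feasible control $u$ (so that $J_R(u)-\epsilon\le 0$), the definition of the dual function gives $D(\mu)=\inf_{u'\in\mathcal{U}_{ad}}\mathcal{L}(u',\mu)\le\mathcal{L}(u,\mu)=J(u)+\mu\big(J_R(u)-\epsilon\big)\le J(u)$, the last step holding because $\mu\ge 0$ multiplies a nonpositive quantity. Taking the infimum over feasible $u$ and then the supremum over $\mu\ge 0$ yields $D^{\ast}\le J^{\ast}$.

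Next I would exploit the three hypotheses on $(u^{\ast},\mu^{\ast})$. By complementary slackness (condition 3), $\mu^{\ast}\big(J_R(u^{\ast})-\epsilon\big)=0$, hence $J(u^{\ast})=J(u^{\ast})+\mu^{\ast}\big(J_R(u^{\ast})-\epsilon\big)=\mathcal{L}(u^{\ast},\mu^{\ast})$, which equals $D(\mu^{\ast})$ by condition 1. Since $u^{\ast}$ is primal feasible by condition 2, it is admissible in the infimum defining $J^{\ast}$, so $J(u^{\ast})\ge J^{\ast}$. Assembling these facts together with weak duality produces the chain $J^{\ast}\le J(u^{\ast})=D(\mu^{\ast})\le D^{\ast}\le J^{\ast}$. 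Every inequality must therefore be an equality, which gives simultaneously $J(u^{\ast})=J^{\ast}$ (so $u^{\ast}$ is optimal for Problem (CLQ)), $D(\mu^{\ast})=D^{\ast}$ (so $\mu^{\ast}$ is optimal for the dual problem), and $D^{\ast}=J^{\ast}$ (zero duality gap).

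The argument is short, so the only genuine care needed is bookkeeping, and the main obstacle is conceptual rather than computational. I must make sure each condition is invoked in the right place: feasibility (condition 2) is precisely what legitimizes $J(u^{\ast})\ge J^{\ast}$, attainment of the infimum at $u^{\ast}$ (condition 1) is what turns $\mathcal{L}(u^{\ast},\mu^{\ast})$ into $D(\mu^{\ast})$, and complementary slackness (condition 3) is what removes the penalty term. The subtlety is to avoid any circularity: the existence of the pair $(u^{\ast},\mu^{\ast})$ is assumed, not derived, and weak duality must be stated with enough generality to cover the present infinite-horizon stochastic and partially observed setting — a point I would handle by appealing to the quoted duality theorems rather than re-deriving weak duality from scratch.
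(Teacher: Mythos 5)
Your proposal is correct and is exactly the standard saddle-point/weak-duality argument; the paper itself offers no written proof for this theorem, merely citing the Lagrangian duality results of \cite{Tsiamis20} and \cite{Andrzej06}, and your chain $J^{\ast}\le J(u^{\ast})=\mathcal{L}(u^{\ast},\mu^{\ast})=D(\mu^{\ast})\le D^{\ast}\le J^{\ast}$ is precisely the argument those references supply. Each of the three hypotheses is invoked in the right place, so nothing is missing.
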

\section{Optimal Risk-Constrained Control}

\subsection{Finite-Horizon case}

Let $\mu\geq0$ be arbitrary but fixed. The Lagrangian function $\mathcal{L}$ is expressed as
\begin{align}
	\mathcal{L}(u,\mu)=&\mathbb{E}\Bigg\{\sum_{k=0}^{N}\Big{[}x_k^{\prime}Q_{\mu}x_k-2\mu x_k^{\prime}Q\mathbb{E}x_k+\mu \mathbb{E}x_k^{\prime}Q\mathbb{E}x_k+(u_k^{L})^\prime  R^Lu_k^L+(u_k^{R})^\prime R^Ru_k^R\Big{]}\nonumber\\ 
	&+x_{N+1}^{\prime}G_{\mu}x_{N+1}-2\mu x_{N+1}^{\prime}Q\mathbb{E}x_{N+1}+\mu \mathbb{E}x_{N+1}^{\prime}Q\mathbb{E}x_{N+1}\Bigg\}+g(\mu),
\end{align}
where
\begin{align*}
	Q_\mu=Q+\mu Q,\quad
	G_\mu=G+\mu Q,\quad
	g(\mu)=-\mu \epsilon.
\end{align*}
To this end, define
\begin{align}\label{bar_J}
	\bar{J}(\mu)=& \mathbb{E}\Bigg\{\sum\limits_{k=0}^{N}\Big{[}x_k^{\prime}Q_{\mu}x_k-\mu \mathbb{E}x_k^{\prime}Q\mathbb{E}x_k+(u_k^{L})^\prime R^Lu_k^L+(u_k^{R})^\prime R^Ru_k^R\Big{]} \nonumber\\
	&\quad+x_{N+1}^{\prime}G_{\mu}x_{N+1}-\mu \mathbb{E}x_{N+1}^{\prime}Q\mathbb{E}x_{N+1}\Bigg\},
\end{align}
which implies
\begin{align}
	D(\mu)=\inf_{u\in \mathcal{U}_{ad}} \mathcal{L}(u,\mu)=\bar{J}^{*}(\mu)+g(\mu), \label{13}
\end{align}
with $\bar{J}^{*}(\mu)=\min\limits_{u\in \mathcal{U}_{ad}}\bar{J}(\mu)$.

\textbf{Problem (FLQ).} For fixed multiplier $\mu\geq 0$, find an optimal control $(u_k^{R*}, u_k^{L*})$ that minimizes the function $\bar{J}(\mu)$, i.e.,
\begin{align}\label{Optimal-control-2}
	\bar{J}(u_k^{R*}, u_k^{L*})=\min\limits_{u\in \mathcal{U}_{ad}} \bar{J}(\mu)=&\min\limits_{u\in \mathcal{U}_{ad}} \mathbb{E}\Bigg\{\sum\limits_{k=0}^{N}\Big{[}x_k^{\prime}Q_{\mu}x_k-\mu \mathbb{E}x_k^{\prime}Q\mathbb{E}x_k+(u_k^{L})^\prime R^Lu_k^L+(u_k^{R})^\prime R^Ru_k^R\Big{]}\nonumber\\
	&\quad~~~~\,+x_{N+1}^{\prime}G_{\mu}x_{N+1}-\mu \mathbb{E}x_{N+1}^{\prime}Q\mathbb{E}x_{N+1}\Bigg\}.
\end{align}

Let 
\begin{align*}
	\hat{u}_k^L=\mathbb{E}[u_k^L\mid \mathcal{F}_k^R],\;
	\tilde{u}_k^L=u_k^L-\hat{u}_k^L.
\end{align*}
Obviously, the following properties can be readily obtained:
\begin{align*}
	\mathbb{E}[\tilde{u}_k^L\mid\mathcal{F}_k^L]=\tilde{u}_k^L,\;
	\mathbb{E}[\hat{u}_k^L\mid\mathcal{F}_k^L]=\hat{u}_k^L,\;
	\mathbb{E}[\tilde{u}_k^L\mid\mathcal{F}_k^R]=0.
\end{align*} 
Then, we rewrite $(\ref{system})$ and $(\ref{bar_J})$ as
\begin{align}
	x_{k+1}&=Ax_k+BU_k+B^L\tilde{u}_k^L+\omega_k,\label{new-system-form}\\\nonumber
	\bar{J}(\mu)&=\mathbb{E}\Bigg\{\sum\limits_{k=0}^{N}\Big{[}x_k^{\prime}Q_{\mu}x_k-\mu \mathbb{E}x_k^{\prime}Q\mathbb{E}x_k+U_k^\prime RU_k+(\tilde{u}_k^L)^\prime R^L\tilde{u}_k^L\Big{]}\\ \nonumber
	&\quad+x_{N+1}^{\prime}G_{\mu}x_{N+1}-\mu \mathbb{E}x_{N+1}^{\prime}Q\mathbb{E}x_{N+1}\Bigg\}
\end{align}
where $U_k=\begin{bmatrix}
	\hat{u}_k^L\\u_k^R
\end{bmatrix}$, $B=\begin{bmatrix}
	B^L\;B^R
\end{bmatrix}$ and $R=\begin{bmatrix}
	R^L&0\\0&R^R
\end{bmatrix}$.

We may now derive the solution to $(\ref{dual})$, which is one of main results of this paper  and provides optimal local and remote control for every fixed multiplier $\mu\geq 0$.
Before showing the optimal strategies for  fixed $\mu$, we first provide the optimal state estimators of  the two controllers.

\begin{lemma}\label{es}
	The optimal state estimators of local and remote controllers are given by
	\begin{align}
		&\hat{x}^{L}_{k \mid k}=\mathbb{E}[x_k\mid \mathcal{F}_k^L]=\hat{x}^L_{k \mid k-1}+W_{k}(y_k^L-C\hat{x}^L_{k \mid k-1}),\label{L_es}\\
		&\hat{x}^L_{k \mid k-1}=\mathbb{E}[x_k\mid \mathcal{F}_{k-1}^L]=A\hat{x}^L_{k-1 \mid k-1}+BU_{k-1}+B^L\tilde{u}_{k-1}^L,\\
		&\hat{x}^R_{k \mid k}=\mathbb{E}[x_k\mid \mathcal{F}_k^R]=\eta_{k}\hat{x}^L_{k \mid k}+(1-\eta_{k})\hat{x}^R_{k \mid k-1},\label{es_x_R}\\
		&\hat{x}^R_{k \mid k-1}=\mathbb{E}[x_k\mid \mathcal{F}_{k-1}^R]=A\hat{x}^R_{k-1 \mid k-1}+BU_{k-1};
	\end{align}
here, $W_k=\Sigma^L_{k\mid k-1}C^{\prime}(C\Sigma^L_{k\mid k-1}C^{\prime}+Q_v)^{-1}$ and $\Sigma_{k\mid k-1}^L$ is the estimation error covariance that satisfies 
\begin{align*}
	&\Sigma_{k\mid k-1}^L=A\Sigma_{k-1\mid k-1}^LA^{\prime}+Q_\omega,\\
	&\Sigma_{k\mid k}^L=(I-W_kC)\Sigma_{k\mid k-1}^L(I-W_kC)^{\prime}+W_kQ_vW_k^{\prime}
\end{align*}
with initial value $\hat{x}^L_{0\mid -1}=\hat{x}^R_{0\mid -1}=\bar{x}_0$ and $\Sigma_{0\mid -1}^L=\Sigma_{init}$.
\begin{proof}
	The optimal estimator $\hat{x}_{k\mid k}^L$ can be obtained by using the standard Kalman filtering. It remains to show how to calculate the optimal estimator $\hat{x}_{k\mid k}^R$. If $\eta_k=1$,  we have $\hat{x}_{k\mid k}^R=\mathbb{E}(x_k\mid \mathcal{F}_k^R)=\mathbb{E}(\hat{x}_{k\mid k}^L\mid \mathcal{F}_k^R)=\hat{x}^L_{k \mid k}$, else $\hat{x}_{k\mid k}^R=\hat{x}_{k\mid k-1}^R$.
\end{proof}
\end{lemma}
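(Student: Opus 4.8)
The plan is to establish all four identities by a single induction on $k$, interleaving the prediction and update steps of the two filtrations and relying throughout on the linear--Gaussian structure of (\ref{new-system-form}) together with the measurability constraints built into $\mathcal{U}_{ad}$. The induction hypothesis is that the recursions for $\hat{x}^L_{\cdot\mid\cdot}$, $\hat{x}^R_{\cdot\mid\cdot}$ and the deterministic covariances hold through time $k-1$; the base case is anchored by $\hat{x}^L_{0\mid-1}=\hat{x}^R_{0\mid-1}=\bar{x}_0$ and $\Sigma^L_{0\mid-1}=\Sigma_{init}$. I would then propagate each formula to time $k$.

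For the local filter I would start from (\ref{new-system-form}) and take $\mathbb{E}[\,\cdot\mid\mathcal{F}^L_{k-1}]$. Since $U_{k-1}$ is $\mathcal{F}^R_{k-1}\subseteq\mathcal{F}^L_{k-1}$-measurable and $\tilde{u}^L_{k-1}$ is $\mathcal{F}^L_{k-1}$-measurable, both control terms pass outside the conditional expectation unchanged, while $\omega_{k-1}$ is independent of $\mathcal{F}^L_{k-1}$ and vanishes; this yields the prediction identity. For the measurement update I would argue that the downlink data add no information about $x_k$: each $y^R_j=\eta_j\hat{x}^L_{j\mid j}$ with $j\le k$ is, given $\eta_j$, a function of $y^L_{0:j}$, and the Bernoulli sequence $\{\eta_j\}$ is independent of $(x,\omega,v)$, so conditioning on $\mathcal{F}^L_k$ coincides with conditioning on $\sigma(y^L_{0:k})$ for estimating $x_k$, and the standard innovation update with gain $W_k$ applies. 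The cleanest route to the covariance recursions---the point I would stress, since the controls are not yet known to be linear---is to form the errors $e^L_{k\mid k-1}=x_k-\hat{x}^L_{k\mid k-1}$ and $e^L_{k\mid k}=x_k-\hat{x}^L_{k\mid k}$ and observe that the control terms cancel identically, giving the control-free recursions $e^L_{k\mid k-1}=Ae^L_{k-1\mid k-1}+\omega_{k-1}$ and $e^L_{k\mid k}=(I-W_kC)e^L_{k\mid k-1}-W_kv_k$; hence the covariances are deterministic and satisfy the stated $\Sigma^L_{k\mid k-1}$, $\Sigma^L_{k\mid k}$ equations for any admissible (possibly nonlinear) feedback.

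For the remote filter, the prediction formula follows by taking $\mathbb{E}[\,\cdot\mid\mathcal{F}^R_{k-1}]$ of (\ref{new-system-form}): $U_{k-1}$ is $\mathcal{F}^R_{k-1}$-measurable, $\mathbb{E}[\tilde{u}^L_{k-1}\mid\mathcal{F}^R_{k-1}]=0$ by the stated property, and $\omega_{k-1}$ drops out, giving $\hat{x}^R_{k\mid k-1}=A\hat{x}^R_{k-1\mid k-1}+BU_{k-1}$. For the update I would split on the channel variable. When $\eta_k=1$ the remote receives $\hat{x}^L_{k\mid k}$ exactly, so it is $\mathcal{F}^R_k$-measurable, and by the tower property $\mathbb{E}[x_k\mid\mathcal{F}^R_k]=\mathbb{E}[\,\mathbb{E}[x_k\mid\mathcal{F}^L_k]\mid\mathcal{F}^R_k]=\mathbb{E}[\hat{x}^L_{k\mid k}\mid\mathcal{F}^R_k]=\hat{x}^L_{k\mid k}$; when $\eta_k=0$ the received symbol is null and uninformative, so $\hat{x}^R_{k\mid k}=\hat{x}^R_{k\mid k-1}$. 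Collapsing the two cases with the indicator $\eta_k$ produces the stated formula (\ref{es_x_R}).

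The main obstacle I anticipate is not the algebra but the information-theoretic bookkeeping: justifying rigorously that the downlink feedback $y^R_{0:k}$ enlarges $\mathcal{F}^L_k$ without adding information relevant to $x_k$ (so the local filter genuinely reduces to the classical Kalman recursion), and symmetrically that on $\{\eta_k=0\}$ the null symbol is uninformative. Both hinge on the independence of $\{\eta_k\}$ from the state and noise sequences and on the fact that $\hat{x}^L_{k\mid k}$ is $\sigma(y^L_{0:k})$-measurable; the degenerate event $\hat{x}^L_{k\mid k}=0$, where the local side cannot read off $\eta_k$ from $y^R_k$, should be dispatched by a brief conditional-independence remark. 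Once these measurability facts are pinned down, the remaining content is exactly the routine conditional-expectation computations above, which is why the author can reasonably defer the local part to ``standard Kalman filtering'' and present only the remote case in detail.
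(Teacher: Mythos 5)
Your proposal is correct and follows essentially the same route as the paper's (much terser) proof: the local filter is the standard Kalman recursion, and the remote update is obtained by splitting on $\eta_k$, using the tower property $\mathbb{E}[x_k\mid\mathcal{F}^R_k]=\mathbb{E}[\hat{x}^L_{k\mid k}\mid\mathcal{F}^R_k]$ when $\eta_k=1$ and the uninformativeness of the dropped packet when $\eta_k=0$. The additional bookkeeping you supply — the control-free error recursions justifying the deterministic covariances, and the independence of $\{\eta_k\}$ from the state and noise sequences justifying that the downlink adds no information about $x_k$ — is exactly what the paper leaves implicit under the phrase ``standard Kalman filtering.''
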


%

To this end, we define the following three Riccati equations:
\begin{align}
	Z_k&=A^{\prime}Z_{k+1}A+Q_\mu-K_k^{\prime}\Upsilon_kK_k,\label{Z}\\
	X_k&=A^{\prime}\Theta_{k+1}A+Q_\mu-L_k^{\prime}\Lambda_k^{-1}L_k,\label{X}\\
	S_k&=A^{\prime}S_{k+1}A-\mu Q+K_k^{\prime}\Upsilon_kK_k-N_k^{\prime}M_k^{-1}N_k,\label{S}
\end{align}
where
\begin{equation}\label{DRE}
	\left\{
	\begin{aligned}
		\Upsilon_k&=B^{\prime}Z_{k+1}B+R,\\
		K_k&=\Upsilon_k^{-1}B^{\prime}Z_{k+1}A,\\
		\bar{K}_k&=M_k^{-1}N_k-\Upsilon_k^{-1}B^{\prime}Z_{k+1}A, \\
		M_k&=B^{\prime}Z_{k+1}B+B^{\prime}S_{k+1}B+R,\\
		N_k&=B^{\prime}Z_{k+1}A+B^{\prime}S_{k+1}A,\\ 
		\Lambda_k&=B^{L^{\prime}}\Theta_{k+1}B^L+R^L,\\
		L_k&=B^{L^{\prime}}\Theta_{k+1}A,\\
		\Theta_k&=(1-p)Z_k+pX_k
	\end{aligned}
	\right.
\end{equation}
with terminal values $Z_{N+1}=X_{N+1}=G_\mu,S_{N+1}=-\mu Q$.

Applying Pontryagin's maximum principle to  system $(\ref{system})$ and cost function $(\ref{bar_J})$, we have the following {costate equations:}
\begin{align}
	\lambda_{k-1}^*&=\mathbb{E}[A^{\prime}\lambda_k^*+Q_{\mu}x_k^*-\mu Q \mathbb{E}x_k^*\mid \mathcal{F}_k^L],\label{init_ladbd}\\
	0&=\mathbb{E}[(B^{L})^\prime\lambda_k^*+R^Lu_k^{L*}\mid \mathcal{F}_k^L],\\
	0&=\mathbb{E}[(B^{R})^\prime\lambda_k^*+R^Ru_k^{R*}\mid \mathcal{F}_k^R],\\
	\lambda_{N}^*&=\mathbb{E}[G_\mu x_{N+1}^*-\mu Q\mathbb{E}x_{N+1}^*\mid \mathcal{F}_{N+1}^L];\label{init_4}	 
\end{align}
here, $\{x_k^*\}$ is the optimal state that corresponds to the optimal controller $(u^{L*}_k, u^{R*}_k)$.

\begin{lemma}\label{lemma2}
	The costate equations (\ref{init_ladbd})-(\ref{init_4}) can be rewritten as
	\begin{align}
		\lambda_{k-1}^*&=\mathbb{E}[A^{\prime}\lambda_k^*+Q_{\mu}x_k^*-\mu Q \mathbb{E}x_k^*\mid \mathcal{F}_k^L],\label{co46}\\
	0&=\mathbb{E}[B^{\prime}\lambda_k^*\mid \mathcal{F}_k^R]+RU_k^*,\label{new-Uk}\\
	0&=\mathbb{E}[(B^L)^{\prime}\lambda_k^*\mid \mathcal{F}_k^L]-\mathbb{E}[(B^L)^{\prime}\lambda_k^*\mid \mathcal{F}_k^R]+R^L\tilde{u}_k^{L*},\label{new-tiu}\\
		\lambda_{N}^*&=\mathbb{E}[G_\mu x_{N+1}^*-\mu Q\mathbb{E}x_{N+1}^*\mid \mathcal{F}_{N+1}^L]\label{lamde_n1}.
	\end{align}
\begin{proof}
 The proof is similar to that
	of \cite{Liang18}. Thus we omit here.
\end{proof}
\end{lemma}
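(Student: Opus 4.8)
The plan is to notice first that the outer two relations are already in the desired form: \eqref{co46} coincides with \eqref{init_ladbd} and \eqref{lamde_n1} coincides with \eqref{init_4}, so nothing is needed for the costate dynamics or the terminal condition. All of the work lies in recasting the two stationarity conditions, and the single structural fact driving everything is the filtration nesting $\mathcal{F}_k^R\subseteq\mathcal{F}_k^L$, which is immediate from the definitions $\mathcal{F}_k^L=\sigma(y_{0:k}^L,y_{0:k}^R)$ and $\mathcal{F}_k^R=\sigma(y_{0:k}^R)$ and reflects that the perfect downlink makes the remote information available to the local controller. Combined with the tower property of conditional expectation, this nesting is the only ingredient beyond bookkeeping.

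First I would strip the measurable inputs out of the two given stationarity equations. Because $u_k^{L*}$ is $\mathcal{F}_k^L$-measurable and $u_k^{R*}$ is $\mathcal{F}_k^R$-measurable, they reduce to $0=\mathbb{E}[(B^L)^\prime\lambda_k^*\mid\mathcal{F}_k^L]+R^Lu_k^{L*}$ and $0=\mathbb{E}[(B^R)^\prime\lambda_k^*\mid\mathcal{F}_k^R]+R^Ru_k^{R*}$. I would then project the first of these onto $\mathcal{F}_k^R$: applying $\mathbb{E}[\,\cdot\mid\mathcal{F}_k^R]$ and using $\mathcal{F}_k^R\subseteq\mathcal{F}_k^L$ collapses the iterated conditioning to $\mathbb{E}[(B^L)^\prime\lambda_k^*\mid\mathcal{F}_k^R]$, while $\mathbb{E}[R^Lu_k^{L*}\mid\mathcal{F}_k^R]=R^L\hat{u}_k^{L*}$ by the defining relation $\hat{u}_k^L=\mathbb{E}[u_k^L\mid\mathcal{F}_k^R]$, giving $0=\mathbb{E}[(B^L)^\prime\lambda_k^*\mid\mathcal{F}_k^R]+R^L\hat{u}_k^{L*}$. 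Stacking this local-projected equation on top of the remote one and reading off the block identities $B^\prime=\begin{bmatrix}(B^L)^\prime\\(B^R)^\prime\end{bmatrix}$, $R=\mathrm{diag}(R^L,R^R)$ and $U_k^*=\begin{bmatrix}\hat{u}_k^{L*}\\u_k^{R*}\end{bmatrix}$ reassembles exactly \eqref{new-Uk}.

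Finally I would obtain \eqref{new-tiu} by subtracting the $\mathcal{F}_k^R$-projected local equation from its un-projected counterpart: the costate part becomes the difference $\mathbb{E}[(B^L)^\prime\lambda_k^*\mid\mathcal{F}_k^L]-\mathbb{E}[(B^L)^\prime\lambda_k^*\mid\mathcal{F}_k^R]$ and the input part becomes $R^L(u_k^{L*}-\hat{u}_k^{L*})=R^L\tilde{u}_k^{L*}$ by definition of $\tilde{u}_k^L$. I do not expect a genuine obstacle here: the lemma is really just the orthogonal decomposition $u_k^{L*}=\hat{u}_k^{L*}+\tilde{u}_k^{L*}$ read through the two nested filtrations, so the only points demanding care are confirming the nesting $\mathcal{F}_k^R\subseteq\mathcal{F}_k^L$ (where the perfect-downlink assumption enters) and keeping the block partition of $B$, $R$ and $U_k$ straight.
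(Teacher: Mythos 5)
Your proposal is correct and complete: the filtration nesting $\mathcal{F}_k^R\subseteq\mathcal{F}_k^L$, the tower property, and the definitions $\hat{u}_k^L=\mathbb{E}[u_k^L\mid\mathcal{F}_k^R]$, $\tilde{u}_k^L=u_k^L-\hat{u}_k^L$ are exactly what is needed, and your stacking/subtraction steps reassemble \eqref{new-Uk} and \eqref{new-tiu} without error. The paper omits this proof entirely (deferring to \cite{Liang18}), and your argument is precisely the standard projection computation that the citation refers to, so you have in effect supplied the details the paper leaves out.
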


\begin{lemma}\label{lemma3}
	Let $M_k>0$, $\Upsilon_k>0$ and $\Lambda_k>0$ for $k=0,\cdots,N+1$. Then, 
	\begin{align}\label{lamde_form}
		\lambda_{k-1}^*=Z_k\hat{x}^{R*}_{k\mid k}+X_{k}(\hat{x}^{L*}_{k\mid k}-\hat{x}^{R*}_{k\mid k})+S_k\mathbb{E}\hat{x}^{R*}_{k\mid k},
	\end{align}
where $\hat{x}_{k\mid k}^{L*}$ and $\hat{x}_{k\mid k}^{R*}$ are the estimation of optimal state $x_k^*$ of local controller and remote controller, respectively.
%

\begin{proof}
	We will show by induction that $\lambda_{k-1}^*$ has the form $(\ref{lamde_form})$ for $k=N+1,\cdots,0$.
	Firstly, noting $(\ref{lamde_n1})$, $Z_{N+1}=X_{N+1}=G_{\mu}$ and $S_{N+1}=-\mu Q$, it is obvious that $(\ref{lamde_form})$ holds for $k=N+1$.
	For $k=N$, by making use of $(\ref{new-system-form})$ and $(\ref{lamde_n1})$, the equality $(\ref{new-Uk})$ can be written as
	\begin{align}\label{24}
		0&=\mathbb{E}[B^{\prime}\lambda_N^*\mid \mathcal{F}_N^R]+RU_N^*\nonumber\\
		&=B^{\prime}\mathbb{E}[G_\mu(Ax_N^*+BU_N^*+B^L\tilde{u}_k^{L*}+\omega_N)\mid \mathcal{F}_N^R]-\mu B^{\prime}Q \mathbb{E}[Ax_N^*+BU_N^*+B^L\tilde{u}_k^{L*}+\omega_N]+RU_N^* \nonumber\\
		&=B^{\prime}G_{\mu}A\hat{x}_{N\mid N}^{R*}+B^{\prime}G_\mu BU_N^*-\mu B^{\prime}QA\mathbb{E}\hat{x}_{N\mid N}^{R*}-\mu B^{\prime}QB\mathbb{E}U_N^*+RU_N^*.
	\end{align}
	Taking mathematical expectation on both sides of $(\ref{24})$, it yields that
	\begin{align*}
		(B^{\prime}G_\mu B+R-\mu B^{\prime}QB)\mathbb{E}U_N^*=-(B^{\prime}G_\mu A-\mu B^{\prime}QA)\mathbb{E}\hat{x}_{N\mid N}^{R*}.
	\end{align*}
	Hence,
	\begin{align}\label{26}
		\mathbb{E}U_N^*=-M_N^{-1}N_N \mathbb{E}\hat{x}_{N\mid N}^{R*}.
	\end{align}
	Taking $(\ref{26})$ into $(\ref{24})$, we get
	\begin{align*}
		(B^{\prime}G_\mu B+R)U_N^*=-B^{\prime}G_\mu A\hat{x}^{R*}_{N\mid N}-(\Upsilon_N M_N^{-1}N_N-B^{\prime}G_\mu A)\mathbb{E}\hat{x}_{N\mid N}^{R*}.
	\end{align*}
	Thus, the optimal $U_N^*$ is
	\begin{align}
		U_N^*&=-\Upsilon_N^{-1}B^{\prime}G_\mu A\hat{x}^{R*}_{N\mid N}-(M_N^{-1}N_N-\Upsilon_N^{-1}B^{\prime}G_\mu A)\mathbb{E}\hat{x}_{N\mid N}^{R*}\nonumber\\
		&=-K_N\hat{x}^{R*}_{N\mid N}-\bar{K}_N\mathbb{E}\hat{x}^{R*}_{N\mid N}\label{28}.
	\end{align}
	By making using of $(\ref{new-system-form})$, $(\ref{lamde_n1})$ and $(\ref{28})$, the equality  $(\ref{new-tiu})$ becomes
	\begin{align*}
		0&=(B^L)^{\prime}G_{\mu}\mathbb{E}[Ax_N^*+BU_N^*+B^L\tilde{u}_N^{L*}+w_N\mid \mathcal{F}_N^L]\\
		&\quad-(B^L)^{\prime}G_{\mu}\mathbb{E}[Ax_N^*+BU_N^*+B^{L}\tilde{u}_k^{L*}+w_N\mid \mathcal{F}_N^R]+R^L\tilde{u}_N^{L*} \nonumber\\
		&=(B^L)^{\prime}G_{\mu}A(\hat{x}_{N\mid N}^{L*}-\hat{x}_{N\mid N}^{R*})+(B^L)^{\prime}G_\mu B^L\tilde{u}_N^{L*}+R^L\tilde{u}_N^{L*}.
	\end{align*}
	Thus, the optimal $\tilde{u}_N^{L*}$ is 
	\begin{align}
		\tilde{u}_N^{L*}&=-[(B^L)^{\prime}G_\mu B^L+R^L]^{-1}(B^L)^{\prime}G_\mu A(\hat{x}_{N\mid N}^{L*}-\hat{x}_{N\mid N}^{R*})\nonumber\\
		&=-\Lambda_N^{-1}L_N(\hat{x}_{N\mid N}^{L*}-\hat{x}_{N\mid N}^{R*})\label{30}.
	\end{align}
	By applying $(\ref{new-system-form})$, $(\ref{lamde_form})$, $(\ref{26})$, $(\ref{28})$ and $(\ref{30})$, it follows from $(\ref{lamde_form})$ that
	\begin{align*}
		\lambda_{N-1}^*&=(Q_\mu+A^{\prime}G_\mu A-A^{\prime}G_\mu B^{L^{\prime}}\Lambda_N^{-1}L_N)(\hat{x}^{L*}_{N\mid N}-\hat{x}_{N\mid N}^{R*})+(Q_\mu+A^{\prime}G_\mu A-A^{\prime}G_\mu BK_N)\hat{x}_{N\mid N}^{R*} \nonumber\\
		&\quad +(K_N^{\prime}\Upsilon_N^{-1}K_N-N_NM_N^{-1}N_N-\mu A^{\prime}QA-\mu Q)\mathbb{E}\hat{x}_{N\mid N}^{R*} \nonumber\\
		&=Z_N\hat{x}_{N\mid N}^{R*}+X_N(\hat{x}_{N\mid N}^{L*}-\hat{x}_{N\mid N}^{R*})+S_{N}\mathbb{E}\hat{x}_{N\mid N}^{R*},
	\end{align*}
	which implies that $(\ref{lamde_form})$ holds for $k=N$.

	To complete the induction, we take any $n$ with $0\leq n\leq N$ and assume that $\lambda_k^*$ takes the form of $(\ref{lamde_form})$ for all $k\geq n+1$. We shall show that $(\ref{lamde_form})$ also holds for $k=n$.
		Using $(\ref{lamde_form})$ and letting $k=n+1$, $\lambda_n^*$ can be written as
	\begin{align*}
		\lambda_n^*=Z_{n+1}\hat{x}_{n+1\mid n+1}^{R*}+X_{n+1}(\hat{x}_{n+1\mid n+1}^{L*}-\hat{x}_{n+1\mid n+1}^{R*})+S_{n+1}\mathbb{E}\hat{x}_{n+1\mid n+1}^{R*}.
	\end{align*}
	By using Lemma \ref{es}, one gets 
	\begin{align*}
		\hat{x}_{n+1\mid n+1}^{L*}&=\hat{x}_{n+1\mid n}^{L*}+W_{n+1}(y_{n+1}^{L*}-C\hat{x}_{n+1\mid n}^{L*})\nonumber\\
		&=A\hat{x}_{n\mid n}^{L*}+BU_n^*+B^L\tilde{u}_n^{L*}+W_{n+1}\big(C(Ax_n^*+BU_n^*+B^L\tilde{u}_n^{L*}+\omega_n) \nonumber\\
		&\quad +v_{n+1}-C(A\hat{x}_{n\mid n}^{L*}+BU_n^*+B^L\tilde{u}_n^{L*})\big)\nonumber\\
		&=A\hat{x}_{n\mid n}^{L*}+BU_n^*+B^L\tilde{u}_n^{L*}+W_{n+1}\big(CA(x_n^*-\hat{x}_{n\mid n}^{L*})+C\omega_n+v_{n+1}\big),\\
		\hat{x}_{n+1\mid n+1}^{R*}&=\eta_{n+1}\hat{x}_{n+1\mid n+1}^{L*}+(1-\eta_{n+1})\hat{x}_{n+1\mid n}^{R*} \nonumber \\
		&=\eta_{n+1}\Big(A(\hat{x}_{n\mid n}^{L*}-\hat{x}_{n\mid n}^{R*})+B^L\tilde{u}_n^{L*} \nonumber\\
		&\quad+W_{n+1}\big(CA(x_n^*-\hat{x}_{n\mid n}^{L*})+C\omega_n+v_{n+1}\big)\Big)+A\hat{x}_{n\mid n}^{R*}+BU_n^*,\\
		\hat{x}_{n+1\mid n+1}^{L*}-\hat{x}_{n+1\mid n+1}^{R*}&=(1-\eta_{n+1})\Big(A(\hat{x}_{n\mid n}^{L*}-\hat{x}_{n\mid n}^{R*})+B^L\tilde{u}_n^{L*}\nonumber\\
		&\quad +W_{n+1}\big(CA(x_n^*-\hat{x}_{n\mid n}^{L*})+C\omega_n+v_{n+1}\big)\Big).
	\end{align*}
	Thus,
	\begin{align}
		\lambda_n^*&=Z_{n+1}\Big((1-p)\Big(A(\hat{x}_{n\mid n}^{L*}-\hat{x}_{n\mid n}^{R*})+B^L\tilde{u}_n^{L*}+W_{n+1}\big(CA(x_n^*-\hat{x}_{n\mid n}^{L*})+C\omega_n+v_{n+1}\big)\Big)+A\hat{x}_{n\mid n}^{R*}\nonumber\\
		&\quad+BU_n^*\Big) 
		 +X_{n+1}\Big(p\Big(A(\hat{x}_{n\mid n}^{L*}-\hat{x}_{n\mid n}^{R*})+B^L\tilde{u}_n^{L*}+W_{n+1}\big(CA(x_n^*-\hat{x}_{n\mid n}^{L*})+C\omega_n+v_{n+1}\big)\Big)\Big)\nonumber\\
		&\quad + S_{n+1}\mathbb{E}[A\hat{x}_{n\mid n}^{R*}+BU_n^*].	\label{35}
	\end{align}
	Plugging $(\ref{35})$ into $(\ref{new-Uk})$, we get 
	\begin{align}\label{36}
		0=B^{\prime}Z_{n+1}(A\hat{x}_{n\mid n}^{R*}+BU_n^*)+B^{\prime}S_{n+1}A\mathbb{E}\hat{x}_{n\mid n}^{R*}+B^{\prime}S_{n+1}B\mathbb{E}U_n^{*}+RU_n^*.
	\end{align}
	Taking mathematical expectation on both sides of $(\ref{36})$, it yields that
	\begin{align}\label{37}
		\mathbb{E}U_n^*=-M_n^{-1}N_n \mathbb{E}\hat{x}_{n\mid n}^{R*}.
	\end{align}
	Taking $(\ref{37})$ into $(\ref{36})$, we get that the optimal controller $U_n^*$ is
	\begin{align}\label{38}
		U_n^*=-K_n\hat{x}^{R*}_{n\mid n}-\bar{K}_n\mathbb{E}\hat{x}^{R*}_{n\mid n}.
	\end{align}
	Plugging $(\ref{35})$ into $(\ref{new-tiu})$, we get
	\begin{align*}
		0&=(1-p)(B^L)^{\prime}Z_{n+1}\Big(A(\hat{x}_{n\mid n}^{L*}-\hat{x}_{n\mid n}^{R*})+B^L\tilde{u}_n^{L*}\Big)\nonumber\\&\quad +p(B^L)^{\prime}X_{n+1}(A(\hat{x}_{n\mid n}^{L*}-\hat{x}_{n\mid n}^{R*})+B^L\tilde{u}_n^{L*}).
	\end{align*} 
	Then, we have that the optimal controller $\tilde{u}_n^{L*}$ is 
	\begin{align}\label{40}
		\tilde{u}_n^{L*}=-\Lambda_n^{-1}L_n(\hat{x}_{n\mid n}^{L*}-\hat{x}_{n\mid n}^{R*}).
	\end{align}
	Now, we show that for $k=n$, $\lambda_{n-1}^*$ takes the form of $(\ref{lamde_form})$. Using $(\ref{38})$ and $(\ref{40})$, one has
	\begin{align*}
		\lambda_{n-1}^*&=\mathbb{E}[A^{\prime}\lambda_n^*+Q_{\mu}x_n^*-\mu Q \mathbb{E}x_n^*\mid \mathcal{F}_n^L]\nonumber\\
		&=(A^{\prime}\Theta_{n+1}A+Q_\mu)(\hat{x}_{n\mid n}^{L*}-\hat{x}_{n\mid n}^{R*})+(A^{\prime}Z_{n+1}A+Q_\mu)\hat{x}_{n\mid n}^{R*} \nonumber\\
		&\quad +(A^{\prime}S_{n+1}A-\mu Q)\mathbb{E}\hat{x}_{n\mid n}^{R*}+A^{\prime}Z_{n+1}BU_n^* +A^{\prime}\Theta_{n+1}B^L\tilde{u}_n^{L*}+A^{\prime}S_{n+1}B\mathbb{E}U_n^* \nonumber\\
		&=(A^{\prime}\Theta_{n+1}A+Q_\mu -A^{\prime}\Theta_{n+1}B^L\Lambda_n^{-1}L_n)(\hat{x}_{n\mid n}^{L*}-\hat{x}_{n\mid n}^{R*})\nonumber\\
		&\quad +(A^{\prime}Z_{n+1}A+Q_\mu-A^{\prime}Z_{n+1}BK_n)\hat{x}_{n\mid n}^{R*}\nonumber\\
		&\quad +(A^{\prime}S_{n+1}A+K_n^{\prime}\Upsilon_n^{-1}K_n-N_n^{\prime}M_n^{-1}N_n-\mu Q)\mathbb{E}\hat{x}_{n\mid n}^{R*} \nonumber\\
		&=Z_n\hat{x}_{n\mid n}^{R*}+X_n(\hat{x}_{n\mid n}^{L*}-\hat{x}_{n\mid n}^{R*})+S_n\mathbb{E}\hat{x}_{n\mid n}^{R*}.
	\end{align*}
	Thus $(\ref{lamde_form})$ holds for $k=n$.
	This completes the proof.
\end{proof}
\end{lemma}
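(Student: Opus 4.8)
The plan is to establish the representation $(\ref{lamde_form})$ by backward induction on $k$, running from $k=N+1$ down to $k=0$. The base case $k=N+1$ is immediate: substituting the terminal values $Z_{N+1}=X_{N+1}=G_\mu$ and $S_{N+1}=-\mu Q$ into the right-hand side of $(\ref{lamde_form})$ collapses the innovation term $X_{N+1}(\hat{x}^{L*}_{N+1\mid N+1}-\hat{x}^{R*}_{N+1\mid N+1})$ against $Z_{N+1}\hat{x}^{R*}_{N+1\mid N+1}$, leaving $G_\mu\hat{x}^{L*}_{N+1\mid N+1}-\mu Q\,\mathbb{E}\hat{x}^{R*}_{N+1\mid N+1}$, which coincides with the terminal costate $(\ref{lamde_n1})$ once one uses $\mathbb{E}[x^*_{N+1}\mid\mathcal{F}^L_{N+1}]=\hat{x}^{L*}_{N+1\mid N+1}$ and the tower identity $\mathbb{E}x^*_{N+1}=\mathbb{E}\hat{x}^{R*}_{N+1\mid N+1}$.

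For the inductive step, assuming $(\ref{lamde_form})$ holds at index $n+1$, I would write $\lambda^*_n$ in that form and then expand the two estimators $\hat{x}^{L*}_{n+1\mid n+1}$ and $\hat{x}^{R*}_{n+1\mid n+1}$ through the one-step filtering recursions of Lemma $\ref{es}$. This is the decisive algebraic move: it exhibits $\lambda^*_n$ as a linear combination of (i) the remote block $A\hat{x}^{R*}_{n\mid n}+BU^*_n$, (ii) the innovation block $A(\hat{x}^{L*}_{n\mid n}-\hat{x}^{R*}_{n\mid n})+B^L\tilde{u}^{L*}_n$ together with the measurement-error term $W_{n+1}\big(CA(x^*_n-\hat{x}^{L*}_{n\mid n})+C\omega_n+v_{n+1}\big)$, and (iii) the deterministic mean. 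The Bernoulli variable $\eta_{n+1}$ multiplies the innovation block, so under any conditional expectation it is replaced by its mean $1-p$ (and $1-\eta_{n+1}$ by $p$); this is precisely where the combination $\Theta_{n+1}=(1-p)Z_{n+1}+pX_{n+1}$ of $(\ref{DRE})$ originates.

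The optimal controls would then be recovered by feeding this expansion into the stationarity conditions $(\ref{new-Uk})$ and $(\ref{new-tiu})$, where the interplay of the two filtrations does the work. Conditioning on $\mathcal{F}^R_n$ annihilates the measurement-error term (zero mean, and independent of $\mathcal{F}^R_n$ via the tower property through $\mathcal{F}^L_n$) as well as the innovation $\hat{x}^{L*}_{n\mid n}-\hat{x}^{R*}_{n\mid n}$ (since $\mathbb{E}[\hat{x}^{L*}_{n\mid n}\mid\mathcal{F}^R_n]=\hat{x}^{R*}_{n\mid n}$), leaving an equation in $U^*_n$ alone; a further unconditional expectation isolates $\mathbb{E}U^*_n=-M_n^{-1}N_n\,\mathbb{E}\hat{x}^{R*}_{n\mid n}$ and hence the feedback $U^*_n=-K_n\hat{x}^{R*}_{n\mid n}-\bar{K}_n\,\mathbb{E}\hat{x}^{R*}_{n\mid n}$. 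Symmetrically, $(\ref{new-tiu})$ retains only the innovation block and yields $\tilde{u}^{L*}_n=-\Lambda_n^{-1}L_n(\hat{x}^{L*}_{n\mid n}-\hat{x}^{R*}_{n\mid n})$. The invertibility hypotheses $M_k,\Upsilon_k,\Lambda_k>0$ are exactly what legitimize these solved forms.

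Finally, I would substitute the two controls back into the costate recursion $(\ref{co46})$, $\lambda^*_{n-1}=\mathbb{E}[A'\lambda^*_n+Q_\mu x^*_n-\mu Q\,\mathbb{E}x^*_n\mid\mathcal{F}^L_n]$, and conditionally average over $\mathcal{F}^L_n$; now the innovation block survives because $\hat{x}^{L*}_{n\mid n}-\hat{x}^{R*}_{n\mid n}$ is $\mathcal{F}^L_n$-measurable, while the measurement-error and process-noise terms again vanish. Splitting $Q_\mu\hat{x}^{L*}_{n\mid n}=Q_\mu(\hat{x}^{L*}_{n\mid n}-\hat{x}^{R*}_{n\mid n})+Q_\mu\hat{x}^{R*}_{n\mid n}$ and collecting the coefficients of the three modes, I would match them against the Riccati recursions $(\ref{Z})$, $(\ref{X})$, $(\ref{S})$, closing the induction. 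The main obstacle is bookkeeping rather than conceptual: one must keep the three modes ($\hat{x}^R$, the innovation $\hat{x}^L-\hat{x}^R$, and the mean) cleanly separated and be scrupulous about which terms are killed by $\mathbb{E}[\,\cdot\mid\mathcal{F}^R_n]$ versus $\mathbb{E}[\,\cdot\mid\mathcal{F}^L_n]$ and by the averaging of $\eta_{n+1}$, since a single misassigned term would corrupt the identification of $Z_n$, $X_n$, or $S_n$.
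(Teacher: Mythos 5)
Your proposal is correct and follows essentially the same route as the paper: backward induction on $k$, expansion of $\hat{x}^{L*}_{n+1\mid n+1}$ and $\hat{x}^{R*}_{n+1\mid n+1}$ via the filtering recursions of Lemma \ref{es}, averaging of $\eta_{n+1}$ to produce $\Theta_{n+1}=(1-p)Z_{n+1}+pX_{n+1}$, recovery of $U_n^*$ and $\tilde{u}_n^{L*}$ from the stationarity conditions (\ref{new-Uk}) and (\ref{new-tiu}) under the two filtrations, and substitution back into the costate recursion to match the Riccati recursions. The only cosmetic difference is that the paper works out the $k=N$ step separately before the general inductive step, whereas you fold it into the induction.
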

The optimal control strategies for any fixed $\mu\geq 0$ are given in the theorem below.
\begin{theorem}\label{theorem2}
	For any fixed $\mu\geq 0$, Problem (FLQ) has a unique solution if and only if 
 $M_k>0$, $\Upsilon_k>0$ and $\Lambda_k>0$ for $k=N,\cdots,0$. In this case, the optimal controllers of Problem (FLQ) are given by 
	\begin{align}
		&u_k^{R^{*}}=-[0\quad I](K_k\hat{x}^{R*}_{k\mid k}+\bar{K}_k\mathbb{E}\hat{x}^{R*}_{k\mid k})\label{optimal_u_k^R},\\
		&u_k^{L^{*}}=-[I\quad 0](K_k\hat{x}^{R*}_{k\mid k}+\bar{K}_k\mathbb{E}\hat{x}^{R*}_{k\mid k})-\Lambda_k^{-1}L_k(\hat{x}^{L*}_{k\mid k}-\hat{x}^{R*}_{k\mid k}) \label{optimal_u_k^L},
	\end{align}
where $\hat{x}^{L*}_{k\mid k}$ and $\hat{x}^{R*}_{k\mid k}$ are defined in Lemma $\ref{lemma3}$, and $K_k$, $\bar{K}_k$, $\Lambda_k$, $L_k$ are given in $(\ref{DRE})$. Furthermore, the optimal cost function $\bar{J}^{*}(\mu)$ is
\begin{align}
	\bar{J}^{*}(\mu)=\mathbb{E}\Big\{x_0^{\prime}\big[Z_0\hat{x}_{0\mid 0}^{R}+X_0(\hat{x}_{0\mid 0}^{L}-\hat{x}_{0\mid 0}^{R})+S_0\mathbb{E}\hat{x}^{R}_{0\mid 0}\big]\Big\}+\sum_{k=0}^{N}C_k \label{optimal_J}
\end{align}
with
\begin{align}\label{C_k}
	C_k=&tr\Big\{(1-p)\Sigma_{k\mid k}^LA^{\prime}Z_{k+1}W_{k+1}CA+(1-p)Q_\omega Z_{k+1}W_{k+1}C+p\Sigma_{k\mid k}^LA^{\prime}C^{\prime}W_{k+1}X_{k+1}W_{k+1}CA \nonumber\\
	&+pQ_\omega C^{\prime}W_{k+1}X_{k+1}W_{k+1}C+pQ_vW_{k+1}X_{k+1}W_{k+1}+Q_\omega\Sigma_{k\mid k}^L\Big\}.
\end{align}
\begin{proof}

\textbf{"Necessity"}. Suppose Problem (FLQ) has a unique solution. We will show by induction that $M_k>0$, $\Upsilon_k>0$ and $\Lambda_k>0$ for $k=N,\cdots,0$.
For $k=0,\cdots,N$, define
\begin{align*}
	\bar{J}(k,\mu)&=\mathbb{E}\Big\{\sum_{l=k}^{N}x_l^\prime Q_{\mu}x_l-\mu x_l^\prime Q\mathbb{E}x_l+U_l^\prime RU_l+(\tilde{u}_l^{L})^{\prime}R^L\tilde{u}_l^{L}+x_{N+1}^\prime G_{\mu}x_{N+1}-\mu x_{N+1}^\prime Q\mathbb{E}x_{N+1}\Big\}.
\end{align*}
%
%
Firstly, for $k=N$, it is clear that $\bar{J}(N,\mu)$ can be expressed as a quadratic function of $x_N$, $\mathbb{E}x_N$, $U_N$, $\mathbb{E}U_N$ and $\tilde{u}_N^{L}$. Let $x_N=0$; since it is assumed that the problem admits a unique solution, $\bar{J}(N,\mu)$ must be strictly positive for any nonzero $U_N$ and $\tilde{u}_N^{L}$. For any $U_N\ne 0$ and $\tilde{u}_N^{L}\ne 0$, we have 
\begin{align}\label{bar_J_N}
	\bar{J}(N,\mu)=\mathbb{E}\Big\{\big(U_N-\mathbb{E}U_N\big)^{\prime}\Upsilon_N\big(U_N-\mathbb{E}U_N\big)+\mathbb{E}(U_N)^\prime M_N\mathbb{E}U_N+(\tilde{u}_N^{L})^{\prime}\Lambda_N\tilde{u}^{L}_N\Big\}> 0.
\end{align}
We immediately have $M_N>0$, $\Upsilon_N>0$ and $\Lambda_N>0$. In fact, in the case $\mathbb{E}U_N=0$, $U_N\ne 0$ and $\tilde{u}_N^{L}\ne 0$, $(\ref{bar_J_N})$ becomes 
\begin{align*}
	\bar{J}(N,\mu)=\mathbb{E}\Big\{U_N^{\prime}\Upsilon_NU_N+(\tilde{u}_N^{L})^{\prime}\Lambda_N\tilde{u}^{L}_N\Big\}> 0.
\end{align*}
Thus, we get $\Upsilon_N>0$ and $\Lambda_N>0$. On the other hand, if $U_N=\mathbb{E}U_N\ne 0$, (\ref{bar_J_N}) becomes
\begin{align*}
	\bar{J}(N,\mu)=\mathbb{E}\Big\{\mathbb{E}U_N^{\prime}M_N\mathbb{E}U_N+(\tilde{u}_N^{L})^{\prime}\Lambda_N\tilde{u}^{L}_N\Big\}> 0,
\end{align*}
which implies $M_N>0$ and $\Lambda_N>0$.

For any $n$ with $0\leq n\leq N$, assume that $M_k>0$, $\Upsilon_k>0$ and $\Lambda_k>0$ for all $k\geq n+1$. We shall show that $M_k>0$, $\Upsilon_k>0$ and $\Lambda_k>0$ for $k=n$. Note that 
\begin{align*}
	&\mathbb{E}[(x_k^*)^{\prime}\lambda_{k-1}^*-(x_{k+1}^*)^{\prime}\lambda_k^*] \nonumber\\
	&=\mathbb{E}\big[(x_k^*)^{\prime}\mathbb{E}[A^{\prime}\lambda_k^*+Q_{\mu}x_k^*-\mu Q\mathbb{E}x_k^*\mid \mathcal{F}_k^L]\big]-\mathbb{E}[(Ax_k^*+BU_k^*+B^L\tilde{u}_k^{L*}+\omega_k)\lambda_k^*] \nonumber\\
	&=\mathbb{E}[(x_k^*)^{\prime}Q_\mu x_k^*-\mu (x_k^*)^{\prime}Q\mathbb{E}x_k^*+(U_k^*)^{\prime}RU_k^*+(\tilde{u}_k^{L*})^{\prime}R^L\tilde{u}_k^{L*}]-\mathbb{E}[\omega_k^{\prime}\lambda_k^*]-tr(\Sigma_{k\mid k}^LQ_\mu).
\end{align*}
Taking summation from $k=n+1$ to $k=N$ on both sides of the above equality, it yields that
\begin{align*}
	&\mathbb{E}[(x_{n+1}^*)^{\prime}\lambda_{n}^*] \nonumber\\
	&=\sum_{k=n+1}^{N}\mathbb{E}[(x_k^*)^{\prime}Q_\mu x_k^*-\mu (x_k^*)^{\prime}Q\mathbb{E}x_k^*+(U_k^*)^\prime RU_k^*+(\tilde{u}_k^{L*})^{\prime}R^L\tilde{u}_k^{L*}+(x_{N+1}^*)^{\prime}G_\mu x_{N+1}^*-\mu (x_{N+1}^*)^{\prime}Q\mathbb{E}x_{N+1}^*]\nonumber\\
	&\quad-\mathbb{E}[\omega_k^{\prime}\lambda_k^*]-tr(\Sigma_{k\mid k}^LQ_\mu)-tr(\Sigma_{N+1\mid N+1}^LG_\mu).
\end{align*}
Thus, we have
\begin{align*}
	\min_{}\bar{J}(n,\mu)&=\min_{}\mathbb{E}[x_n^\prime Q_\mu x_n-\mu x_n^{\prime}Q\mathbb{E}x_n+U_n^{\prime}RU_n+(\tilde{u}_n^{L})^{\prime}R^L\tilde{u}_n^{L}]\\
	&\quad+ \mathbb{E}[(x_{n+1}^*)^{\prime}\lambda_n^*]+\sum_{k=n+1}^{N}\mathbb{E}[\omega_k^{\prime}\lambda_k^*]+tr(\Sigma_{k\mid k}^LQ_\mu)+tr(G_\mu\Sigma_{N+1\mid N+1}^L).
\end{align*}
Since $M_k>0$, $\Upsilon_k>0$ and $\Lambda_k>0$ for $k>n+1$, $(\ref{lamde_form})$ holds for $k=n+1$. Setting $x_n=0$, the above equation becomes
\begin{align*}
	&\quad\min_{}\bar{J}(n,\mu)\\
	&=\min_{}\mathbb{E}\Big\{\big(U_n-\mathbb{E}U_n\big)^{\prime}\Upsilon_n\big(U_n-\mathbb{E}U_n\big)+\mathbb{E}U_n^{\prime}M_n\mathbb{E}U_n+(\tilde{u}_n^{L})^{\prime}\Lambda_n\tilde{u}^{L}\Big\}+(1-p)tr(Q_\omega X_{n+1}W_{n+1}C)\\
	&\quad+ptr(Q_\omega Z_{n+1}W_{n+1}C)+\sum_{k=n+1}^{N}\mathbb{E}[\omega_k^{\prime}\lambda_k^*]+tr(\Sigma_{k\mid k}^LQ_\mu)+tr(G_\mu\Sigma_{N+1\mid N+1}^L).
\end{align*}
The uniqueness of the optimal $U_n^*$ and $\tilde{u}_n^{L*}$ implies that the terms $M_n>0$, $\Upsilon_n>0$ and $\Lambda_n>0$.
The proof of the necessity is completed.

\textbf{"Sufficiency"}. Suppose that $M_k>0$, $\Upsilon_k>0$ and $\Lambda_k>0$ for $k=N,\cdots,0$. The uniqueness of the solution to Problem (FLQ) is to be shown.
Define
\begin{align}
	V(k)=\mathbb{E}\Big\{x_k^{\prime}Z_k\hat{x}_{k\mid k}^R+x_k^{\prime}X_k(\hat{x}_{k\mid k}^L-\hat{x}_{k\mid k}^R)+x_k^{\prime}S_k\mathbb{E}x_k\Big\}.\label{V_k}
\end{align}
Then,
\begin{align}
	V_{k+1}&=\mathbb{E}\Big\{x_{k+1}^{\prime}Z_{k+1}\hat{x}_{k+1\mid k+1}^R+x_{k+1}^{\prime}X_{k+1}(\hat{x}_{k+1\mid k+1}^L-\hat{x}_{k+1\mid k+1}^R)+x_{k+1}^{\prime}S_{k+1}\mathbb{E}x_{k+1}\Big\}\nonumber\\
	&=\mathbb{E}\Big\{x_k^{\prime}(A^{\prime}Z_{k+1}A-K_k^{\prime}\Upsilon_k^{-1}K_k)x_k+\hat{x}^{R^{\prime}}_{k\mid k}K_k^{\prime}\Upsilon_k^{-1}K_k\hat{x}^{R}_{k\mid k} \nonumber\\
	&\quad+(\hat{x}_{k\mid k}^L-\hat{x}_{k\mid k}^R)^{\prime}\big((1-p)A^{\prime}Z_{k+1}A-A^{\prime}Z_{k+1}A+pA^{\prime}X_{k+1}A+K_k^{\prime}\Upsilon_k^{-1}K_k\big)(\hat{x}_{k\mid k}^L-\hat{x}_{k\mid k}^R) \nonumber\\
	&\quad+2\tilde{u}_k^{L^{\prime}}\big((1-p)B^{L^{\prime}}Z_{k+1}A+pB^{L^{\prime}}X_{k+1}A\big)(\hat{x}_{k\mid k}^L-\hat{x}_{k\mid k}^R) \nonumber\\
	&\quad+\tilde{u}_k^{L^{\prime}}\big((1-p)B^{L^{\prime}}Z_{k+1}B^{L}+pB^{L^{\prime}}X_{k+1}B^{L}\big)\tilde{u}_k^{L} \nonumber\\
	&\quad+2U_k^{\prime}B^{\prime}Z_{k+1}A\hat{x}_k^R+U_k^{\prime}B^{\prime}Z_{k+1}BU_k+x_{k+1}S_{k+1}\mathbb{E}x_{k+1} \nonumber\\
	&\quad+tr\Big\{\Sigma_{k\mid k}^LK_k^{\prime}\Upsilon_k^{-1}K_k+(1-p)\Sigma_{k\mid k}^LA^{\prime}Z_{k+1}W_{k+1}CA \nonumber\\
	&\quad+(1-p)Z_{k+1}W_{k+1}CQ_\omega+p\Sigma_{k\mid k}^LA^{\prime}C^{\prime}W_{k+1}X_{k+1}W_{k+1}CA \nonumber\\
	&\quad+pC^{\prime}W_{k+1}X_{k+1}W_{k+1}CQ_\omega+pW_{k+1}X_{k+1}W_{k+1}Q_v-\Sigma_{k\mid k}^LA^{\prime}Z_{k+1}A\Big\}. \label{V_k+1}
\end{align}
Combining $(\ref{V_k})$ with $(\ref{V_k+1})$, we get
\begin{align*}
	V(k)-V(k+1)=&\mathbb{E}\Big\{x_k^{\prime}Q_\mu x_k-\mu x_k^{\prime}Q\mathbb{E}x_k+U_k^{\prime}RU_k+\tilde{u}_k^{L^{\prime}}R^L\tilde{u}_k^L\\
	&-\big(U_k-\mathbb{E}U_k+K_k(x_k-\mathbb{E}x_k)\big)^{\prime}\Upsilon_k\big(U_k-\mathbb{E}U_k+K_k(x_k-\mathbb{E}x_k)\big)\\
	&-\big(\mathbb{E}U_k+(K_k+\bar{K}_k)\mathbb{E}x_k\big)^{\prime}M_k\big(\mathbb{E}U_k+(K_k+\bar{K}_k)\mathbb{E}x_k\big)\\
	&-\big(\tilde{u}_k^L+\Lambda_k^{-1}L_k(\hat{x}_{k\mid k}^L-\hat{x}_{k\mid k}^R)\big)^{\prime}\Lambda_k\big(\tilde{u}_k^L+\Lambda_k^{-1}L_k(\hat{x}_{k\mid k}^L-\hat{x}_{k\mid k}^R)\big)\Big\}-C_k.
\end{align*}
Taking summation from $k=0$ to $k=N$ on the both sides of above equation, the function $\bar{J}(\mu)$ can be written as
\begin{align*}
	\bar{J}(\mu)&=\mathbb{E}\Big\{x_0^{\prime}\big[Z_0\hat{x}_{0\mid 0}^R+X_0(\hat{x}_{0\mid 0}^L-\hat{x}_{0\mid 0}^R)+S_0\mathbb{E}x_0\big]\Big\}\\
	&+\sum_{k=0}^{N}\mathbb{E}\Big\{\big(U_k-\mathbb{E}U_k+K_k(x_k-\mathbb{E}x_k)\big)^{\prime}\Upsilon_k\big(U_k-\mathbb{E}U_k+K_k(x_k-\mathbb{E}x_k)\big)\\
	&+\big(\mathbb{E}U_k+(K_k+\bar{K}_k)\mathbb{E}x_k\big)^{\prime}M_k\big(\mathbb{E}U_k+(K_k+\bar{K}_k)\mathbb{E}x_k\big)\\
	&+\big(\tilde{u}_k^L+\Lambda_k^{-1}L_k(\hat{x}_{k\mid k}^L-\hat{x}_{k\mid k}^R)\big)^{\prime}\Lambda_k\big(\tilde{u}_k^L+\Lambda_k^{-1}L_k(\hat{x}_{k\mid k}^L-\hat{x}_{k\mid k}^R)\big)+C_k\Big\},
\end{align*}
where $C_k$ is given in (\ref{C_k}). Noticing $M_k>0$, $\Upsilon_k>0$ and $\Lambda_k>0$ for $k=N,\cdots,0$, we have
\begin{align*}
	\bar{J}(\mu)\geq \mathbb{E}\Big\{x_0^{\prime}\big[Z_0\hat{x}_{0\mid 0}^R+X_0(\hat{x}_{0\mid 0}^L-\hat{x}_{0\mid 0}^R)+S_0\mathbb{E}x_0\big]\Big\}+\sum_{k=0}^{N}C_k;
\end{align*}
thus the minimum of $\bar{J}(\mu)$ is given by ($\ref{optimal_J}$). In this case the optimal controller  will satisfy
\begin{align*}
	U_k^*-\mathbb{E}U_k^*+K_k(x_k^*-\mathbb{E}x_k^*)&=0,\\
	\mathbb{E}U_k^*-(K_k+\bar{K}_k)\mathbb{E}x_k^*&=0,\\
	\tilde{u}_k^{L*}+\Lambda_k^{-1}L_k(\hat{x}_{k\mid k}^{L*}-\hat{x}_{k\mid k}^{R*})&=0.
\end{align*}
Therefore, the optimal controller for Problem (FLQ) can be uniquely obtained as $(\ref{optimal_u_k^R})$  $(\ref{optimal_u_k^L})$.
\end{proof}
\end{theorem}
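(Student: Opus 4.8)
The plan is to establish the two directions of the equivalence separately, relying on the costate representation of Lemma~\ref{lemma3} for the constructive part and on a cost-to-go/telescoping argument for the converse. The backbone of both directions is the value function
\begin{align*}
V(k)=\mathbb{E}\Big\{x_k^{\prime}Z_k\hat{x}_{k\mid k}^R+x_k^{\prime}X_k(\hat{x}_{k\mid k}^L-\hat{x}_{k\mid k}^R)+x_k^{\prime}S_k\mathbb{E}x_k\Big\},
\end{align*}
together with the three Riccati recursions $(\ref{Z})$--$(\ref{S})$ and the estimator dynamics of Lemma~\ref{es}.

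For the necessity direction I would argue by backward induction that uniqueness of the minimizer forces $\Upsilon_k>0$, $M_k>0$ and $\Lambda_k>0$. At the terminal stage $k=N$ the tail cost $\bar J(N,\mu)$ is a quadratic form in $U_N$, $\mathbb{E}U_N$ and $\tilde u_N^L$; setting $x_N=0$ and regrouping via the orthogonal split $U_N=(U_N-\mathbb{E}U_N)+\mathbb{E}U_N$ exhibits it as $\mathbb{E}\{(U_N-\mathbb{E}U_N)^{\prime}\Upsilon_N(U_N-\mathbb{E}U_N)+\mathbb{E}U_N^{\prime}M_N\mathbb{E}U_N+(\tilde u_N^L)^{\prime}\Lambda_N\tilde u_N^L\}$, so a unique minimizer compels the three blocks to be positive definite. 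For the inductive step, assuming positivity for all indices exceeding $n$, Lemma~\ref{lemma3} applies from $n+1$ onward; the identity obtained from $\mathbb{E}[(x_k^*)^{\prime}\lambda_{k-1}^*-(x_{k+1}^*)^{\prime}\lambda_k^*]$ telescopes the cost-to-go and lets me write $\min\bar J(n,\mu)$, after setting $x_n=0$, as the same three-block quadratic in $U_n$, $\mathbb{E}U_n$, $\tilde u_n^L$ plus constant trace terms; uniqueness then yields $\Upsilon_n>0$, $M_n>0$, $\Lambda_n>0$.

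For the sufficiency direction I would compute $V(k)-V(k+1)$ by inserting the dynamics $(\ref{new-system-form})$ and the estimator recursions of Lemma~\ref{es} into $V(k+1)$, taking expectations over the Bernoulli variable $\eta_{k+1}$ (so that $\mathbb{E}\eta_{k+1}=1-p$ produces the convex combination $\Theta_{k+1}=(1-p)Z_{k+1}+pX_{k+1}$), and collapsing the result through $(\ref{Z})$--$(\ref{S})$. Completing the square in the control variables turns $V(k)-V(k+1)$ into the running cost minus three nonnegative quadratics centred at the feedback laws $-K_k\hat x^R_{k\mid k}-\bar K_k\mathbb{E}\hat x^R_{k\mid k}$ and $-\Lambda_k^{-1}L_k(\hat x^L_{k\mid k}-\hat x^R_{k\mid k})$, less the constant $C_k$. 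Summing from $k=0$ to $N$ telescopes the left side and expresses $\bar J(\mu)$ as the boundary term $\mathbb{E}\{x_0^{\prime}[Z_0\hat x_{0\mid 0}^R+X_0(\hat x_{0\mid 0}^L-\hat x_{0\mid 0}^R)+S_0\mathbb{E}x_0]\}$, the three accumulated quadratics, and $\sum_{k=0}^N C_k$. Positivity of $\Upsilon_k$, $M_k$, $\Lambda_k$ makes the quadratics nonnegative and strictly convex in the controls, so $\bar J(\mu)$ is uniquely minimized by annihilating each square, which reproduces $(\ref{optimal_u_k^R})$--$(\ref{optimal_u_k^L})$ and leaves the optimal value $(\ref{optimal_J})$.

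The main obstacle is the bookkeeping inside $V(k)-V(k+1)$: one must correctly propagate the filtering errors $x_k-\hat x^L_{k\mid k}$ and the local/remote gap $\hat x^L_{k\mid k}-\hat x^R_{k\mid k}$ through the dynamics, keep track of the cross terms weighted by $\eta_{k+1}$ and $1-\eta_{k+1}$, and verify that all residual trace contributions---those involving $\Sigma^L_{k\mid k}$, $Q_\omega$, $Q_v$ and $W_{k+1}$---assemble exactly into $C_k$ of $(\ref{C_k})$ once the Riccati identities are invoked. The analogous telescoping identity in the necessity step requires the same care with the noise term $\mathbb{E}[\omega_k^{\prime}\lambda_k^*]$ and the trace corrections $\mathrm{tr}(\Sigma^L_{k\mid k}Q_\mu)$; apart from this algebra, both directions are routine.
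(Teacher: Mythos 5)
Your proposal follows the paper's own argument essentially verbatim: the same three-block quadratic decomposition (weighted by $\Upsilon_N$, $M_N$, $\Lambda_N$) of the tail cost with $x_N=0$ plus the telescoping identity for $\mathbb{E}[(x_k^*)^{\prime}\lambda_{k-1}^*-(x_{k+1}^*)^{\prime}\lambda_k^*]$ for necessity, and the same value function $V(k)$ with completion of squares and telescoping for sufficiency. The approach and the structure of both directions match the paper, so the proposal is correct as a proof plan.
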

We know that in addition to Gaussian noise, there are some noises that are more prone to extreme situations, such as heavy-tailed or skewed noise (\cite{Tsiamis20}). The following assumption and corollary give the relevant conclusions of optimal control when the system noise is non-Gaussian.

\begin{assumption}\label{no_G}
	The process noise $\omega_k$ and measurement noise $v_k$ are i.i.d. across time, but not necessarily Gaussian.
\end{assumption}
For non-Gaussian noise, we cannot use standard Kalman filter. For this reason, define the error between the prediction and the estimate for local and remote controller:
\begin{align}
	e_k^L=\hat{x}_{k\mid k}^L-\hat{x}_{k\mid k-1}^L,\label{non_G_L}\\
	e_k^R=\hat{x}_{k\mid k}^R-\hat{x}_{k\mid k-1}^R\label{non_G_R}.
\end{align}
\begin{corollary}
	Under Assumption \ref{no_G}, the control policy and the estimation process can be designed independent of estimation process. In the other words, the certainty equivalence property hold.
\end{corollary}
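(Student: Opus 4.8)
The plan is to retrace the completion-of-squares argument used in the sufficiency part of Theorem \ref{theorem2} and to isolate exactly where Gaussianity was invoked, showing that it enters only through an additive, control-free constant and never through the feedback gains. First I would observe that the gains $K_k$, $\bar{K}_k$, and $\Lambda_k^{-1}L_k$ are generated by the Riccati recursions (\ref{Z})--(\ref{S}) together with the coupling relations (\ref{DRE}); these involve only $A$, $B$, $B^L$, the cost weights $Q_\mu$, $G_\mu$, $R$, $R^L$, and the failure probability $p$, and are manifestly independent of the noise law and of the covariances $Q_\omega$, $Q_v$. Hence under Assumption \ref{no_G} the Riccati data, and therefore the candidate feedback gains, are unchanged.

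Next I would re-establish the identity for $V(k)-V(k+1)$ from the proof of Theorem \ref{theorem2} in a distribution-free manner. The algebraic manipulation producing the three quadratic residuals in $U_k-\mathbb{E}U_k$, $\mathbb{E}U_k$, and $\tilde{u}_k^L$ is purely formal and uses no distributional hypothesis. The only place where Gaussianity appeared was in collecting the cross terms between the estimation errors $x_k-\hat{x}_{k\mid k}^L$ and $\hat{x}_{k\mid k}^L-\hat{x}_{k\mid k}^R$ on the one hand and the control/estimate quantities on the other. These cross terms vanish in expectation because $\hat{x}_{k\mid k}^L=\mathbb{E}[x_k\mid\mathcal{F}_k^L]$ and $\hat{x}_{k\mid k}^R=\mathbb{E}[x_k\mid\mathcal{F}_k^R]$, so each estimation error is orthogonal to every $\mathcal{F}_k^L$- or $\mathcal{F}_k^R$-measurable factor, in particular to every admissible control; this orthogonality is a consequence of the tower property alone. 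Using the errors $e_k^L$, $e_k^R$ of (\ref{non_G_L})--(\ref{non_G_R}) in place of the Kalman innovations, the same recursions for $\hat{x}_{k+1\mid k+1}^L$ and $\hat{x}_{k+1\mid k+1}^R$ as in Lemma \ref{es} persist, now with the conditional-mean estimator in the role of the Kalman estimator.

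Consequently, $\bar{J}(\mu)$ still decomposes as $\mathbb{E}\{x_0'[Z_0\hat{x}_{0\mid 0}^R+X_0(\hat{x}_{0\mid 0}^L-\hat{x}_{0\mid 0}^R)+S_0\mathbb{E}x_0]\}+\sum_k\mathbb{E}\{(\text{three quadratic residuals})+\tilde{C}_k\}$, where now only the additive constant $\tilde{C}_k$ carries the noise statistics and may differ from $C_k$ of (\ref{C_k}). Since $\tilde{C}_k$ contains no control variable, the minimizer is again characterized by annihilating the three residuals, yielding the certainty-equivalence laws $U_k^*=-K_k\hat{x}_{k\mid k}^{R*}-\bar{K}_k\mathbb{E}\hat{x}_{k\mid k}^{R*}$ and $\tilde{u}_k^{L*}=-\Lambda_k^{-1}L_k(\hat{x}_{k\mid k}^{L*}-\hat{x}_{k\mid k}^{R*})$, with the gains unchanged and $\hat{x}$ the conditional-mean estimate. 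This exhibits the desired decoupling: the gains are computed from the Riccati data alone, while the estimates are produced by a separate filter, so controller and estimator are designed independently.

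The main obstacle is the orthogonality bookkeeping of the second step. In the Gaussian setting one typically exploits genuine independence of the innovation sequence, whereas Assumption \ref{no_G} provides only uncorrelatedness of the conditional-mean error with the past information. I would therefore need to verify term by term that every cross product surviving in $V(k)-V(k+1)$ and in the expansion of $\hat{x}_{k+1\mid k+1}$ pairs an estimation error with an $\mathcal{F}_k$-measurable factor, so that uncorrelatedness alone forces it to vanish in expectation, and that no residual quadratic term determining the optimal control depends on the noise distribution. Once this is checked, the argument is distribution-free and the corollary follows.
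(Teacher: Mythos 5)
Your proposal is correct and follows essentially the same route as the paper: the paper's own (very terse) proof rests on exactly the observation that $\mathbb{E}(e_{k+1}^R\mid\mathcal{F}_k^R)=\mathbb{E}(e_{k+1}^L\mid\mathcal{F}_k^R)=0$ for the conditional-mean innovations, after which the derivations of Lemma \ref{lemma3} and Theorem \ref{theorem2} go through verbatim, which is what your gain-versus-constant decomposition and orthogonality bookkeeping make explicit. You simply spell out in more detail what the paper asserts is ``obvious.''
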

\begin{proof}
	Under Assumption \ref{no_G}, we know that $\mathbb{E}(e_{k+1}^R\mid \mathcal{F}^R_k)=\mathbb{E}(e_{k+1}^L\mid \mathcal{F}^R_k)=0$ from $(\ref{non_G_L})$ and $(\ref{non_G_R})$. Obviously, the derivation of optimal risk constraint control is similar to Lemma \ref{lemma3} and Theorem \ref{theorem2}, so it is omitted. The certainty equivalence property holds.
\end{proof}

\begin{remark}
In the above, we have assumed that the state constrains  matrix is same to the state weighting matrix $Q$ in the cost function $J$.  However, all of our derived results are still valid if we redefine the matrix $Q$ in constraint (\ref{constraint-0}) to make it different from the  state weighting matrix in the cost.
\end{remark}

\subsection{Infinite-Horizon case}
In this section, the infinite-horizon version of Problem (FLQ) is studied.

\textbf{Problem ($\mbox{FLQ}_\infty$).} For every fixed multiplier $\mu\geq 0$, find $\mathcal{F}^R_k$-measurable $U_k^*$ and $\mathcal{F}^L_k$-measurable $\tilde{u}_k^{L*}$ that make system (\ref{new-system-form}) is bounded in the mean-square sense and simultaneously minimize the function $\bar{J}_\infty(\mu)$:
\begin{align}
	\min\limits_{} &\bar{J}_{\infty}(\mu)=\lim\limits_{N\to \infty} \frac{1}{N}\mathbb{E}\Big\{\sum\limits_{k=0}^{\infty}x_k^{\prime}Q_{\mu}x_k-\mu \mathbb{E}x_k^{\prime}Q\mathbb{E}x_k+U_k^{\prime}RU_k+(\tilde{u}_k^{L})^\prime R^L\tilde{u}_k^L\Big\}\label{inf_J}.
\end{align}

 Because of the additive noise, it is impossible that the controlled system achieves the mean-square stability. Alternatively, we will study the mean-square boundedness
and investigate the corresponding necessary and sufficient conditions.
We first consider the system without additive noise with observation models (\ref{obervation-1})
\begin{align}\label{inf_system}
	x_{k+1}=Ax_k+BU_k+B^L\tilde{u}_k^L,
\end{align}
and the corresponding infinite-horizon cost function (for every fixed multiplier $\mu\geq 0$) is given by 
\begin{align}
	&{J}_{\infty}(\mu)= \mathbb{E}\Big\{\sum\limits_{k=0}^{\infty}x_k^{\prime}Q_{\mu}x_k-\mu \mathbb{E}x_k^{\prime}Q\mathbb{E}x_k+U_k^{\prime}RU_k+(\tilde{u}_k^{L})^\prime R^L\tilde{u}_k^L\Big\}\label{inf_new_problem}.
\end{align}
%


\begin{definition}
	The system (\ref{inf_system}) with $U_k=0$ and $\tilde{u}_k^L=0$ is said to be asymptotically mean-square stable, if for any initial value $x_0$ there holds $\lim\limits_{k\to \infty}\mathbb{E}[x_k^{\prime}x_k]=0$.
\end{definition}
\begin{definition}
	The system (\ref{inf_system}) is said to be stabilizable in the mean-square sense, if there exist $\mathcal{F}_k^R$-measurable $U_k=-K\hat{x}_{k\mid k}^R-\bar{K}\mathbb{E}\hat{x}_{k\mid k}^R$ and $\mathcal{F}_k^L$-measurable $\tilde{u}_k^L=-L^L(\hat{x}_{k\mid k}^L-\hat{x}_{k\mid k}^R)$ with constant matrices $K$, $\bar{K}$ and $L^L$ such that for any $x_0$ the closed-loop system of (\ref{inf_system}) is asymptotically mean-square stable.
\end{definition}

Next we denote $\mathcal{Q}=\mbox{diag}\{Q_\mu,Q\}$ and present the following assumptions. 
\begin{assumption}\label{ass2}
	$R^L> 0$, $R^R> 0$ and $\mathcal{Q}=D^{\prime}D\geq 0$  for some matrices $D$.
\end{assumption}
\begin{assumption}\label{ass3}
	$(A,\mathcal{Q}^{\frac{1}{2}})$ is observable, and $(A,C)$ is detectable. 
\end{assumption}
%

\begin{lemma}\label{lemma4}
	Under Assumptions \ref{ass2} and \ref{ass3}, if there exist stabilizing controllers
	\begin{align}
		U_k&=-K\hat{x}^R_{k\mid k}-\bar{K}\mathbb{E}\hat{x}^R_{k\mid k},\\
		\tilde{u}_k^L&=-\Lambda^{-1}L(\hat{x}^L_{k\mid k}-\hat{x}^R_{k\mid k})
	\end{align}
	to make the system (\ref{inf_system}) stabilizable in the mean-square sense, then the following algebraic Riccati equations
	\begin{align}
		Z&=A^{\prime}ZA+Q_\mu-K^{\prime}\Upsilon K,\label{inf_Z}\\
		X&=A^{\prime}\Theta A+Q_\mu-L^{\prime}\Lambda^{-1}L,\label{inf_X}\\
		S&=A^{\prime}SA-\mu Q+K^{\prime}\Upsilon K-N^{\prime}M^{-1}N\label{inf_S}
	\end{align}
	 admit solutions $Z$, $X$ and $S$ that satisfy $Z>0$, $Z+S>0$ and $\Theta>0$
	with
	\begin{equation}\label{inf_DRE}
		\left\{
		\begin{aligned}
			\Upsilon&=B^{\prime}ZB+R,\\
			K&=\Upsilon^{-1}B^{\prime}ZA,\\
			\bar{K}&=M^{-1}N-\Upsilon^{-1}B^{\prime}ZA, \\
			M&=B^{\prime}ZB+B^{\prime}SB+R,\\
			N&=B^{\prime}ZA+B^{\prime}SA,\\ 
			\Lambda&=B^{L^{\prime}}\Theta B^L+R^L,\\
			L&=B^{L^{\prime}}\Theta A,\\
			\Theta&=(1-p)Z+pX .
		\end{aligned}
		\right.
	\end{equation}
\end{lemma}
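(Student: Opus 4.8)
The plan is to realize the algebraic Riccati solutions $Z$, $X$, $S$ as limits of the finite-horizon recursions $(\ref{Z})$--$(\ref{S})$ of Theorem \ref{theorem2} as the horizon $N\to\infty$, and then read off the required positivity from Assumptions \ref{ass2}--\ref{ass3}. The first thing I would exploit is a decoupling hidden in $(\ref{Z})$--$(\ref{S})$: the recursion for $Z_k$ is a self-contained standard LQ Riccati iteration for $(A,B)$ with state weight $Q_\mu$ and control weight $R$; adding $(\ref{Z})$ and $(\ref{S})$ and using $M_k=B'(Z_{k+1}+S_{k+1})B+R$, $N_k=B'(Z_{k+1}+S_{k+1})A$ shows that $P_k:=Z_k+S_k$ satisfies the \emph{decoupled} standard Riccati recursion $P_k=A'P_{k+1}A+Q-A'P_{k+1}B(B'P_{k+1}B+R)^{-1}B'P_{k+1}A$ with terminal value $P_{N+1}=G_\mu-\mu Q=G\ge 0$; and only then does $X_k$ enter, driven by $Z_k$ and $X_k$ through $\Theta_k=(1-p)Z_k+pX_k$.

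With this decoupling in hand, I would treat the three limits in stages. For $Z_k$ and $P_k$ I would invoke the classical convergence theory for the standard discrete-time Riccati difference equation: writing the per-stage running cost in the mean/deviation form $\mathbb{E}[(x_k-\mathbb{E}x_k)'Q_\mu(x_k-\mathbb{E}x_k)]+(\mathbb{E}x_k)'Q(\mathbb{E}x_k)+\text{(control terms)}$ shows it is nonnegative, so the finite-horizon value is monotone nondecreasing in $N$; the stabilizing controller assumed in the hypothesis furnishes a uniform upper bound on the value for every initial condition, whence $Z_0^{N}$ and $P_0^{N}$ are monotone and bounded and therefore converge. Passing to the limit in the stationary recursions yields $(\ref{inf_Z})$ and the equation $P=A'PA+Q-A'PB(B'PB+R)^{-1}B'PA$, i.e. $(\ref{inf_Z})$--$(\ref{inf_S})$ after subtracting. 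For $X_k$ I would then regard the now-known $Z$ as a fixed parameter and prove convergence of the remaining monotone bounded iteration (equivalently a Riccati iteration for $(\sqrt{p}\,A,B^L)$ with the constant term $(1-p)A'ZA+Q_\mu$), which gives $(\ref{inf_X})$.

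For the positivity I would use the block structure $\mathcal{Q}=\mathrm{diag}\{Q_\mu,Q\}$ deliberately: observability of $(A,\mathcal{Q}^{1/2})$ in Assumption \ref{ass3}, i.e. of both $(A,Q_\mu^{1/2})$ and $(A,Q^{1/2})$, delivers $Z>0$ from the Lyapunov rewriting $Z=(A-BK)'Z(A-BK)+K'RK+Q_\mu$, and $Z+S=P>0$ from the standard Riccati for $P$; here detectability of $(A,C)$ guarantees the companion estimation-error covariance is stable, so that the closed loop is genuinely mean-square stable and the cost bound used above is finite. Finally, completing the square in $(\ref{inf_X})$ gives $X=(A-B^L\Lambda^{-1}L)'\Theta(A-B^L\Lambda^{-1}L)+(\Lambda^{-1}L)'R^L(\Lambda^{-1}L)+Q_\mu\ge Q_\mu\ge 0$ whenever $\Theta\ge 0$, so $X\ge 0$ holds at the terminal time and is inherited along the iteration; then $\Theta=(1-p)Z+pX\ge(1-p)Z>0$ since $Z>0$ and $p<1$.

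I expect the main obstacle to be the stabilizability bookkeeping rather than the algebra: the hypothesis is stated as mean-square stabilizability of the \emph{partially observed} system $(\ref{inf_system})$, and I must convert it into the ordinary stabilizability of $(A,B)$ and the detectability of $(A,C)$ that the classical Riccati convergence arguments require, and confirm that the feedback gains read off from the limiting AREs are themselves stabilizing so that the Lyapunov rewritings used for $Z>0$ and $P>0$ are legitimate. The coupling of $X_k$ to $Z_k$ through $\Theta_k$ also needs care, but because $Z_k$ has already been shown to converge this reduces to a single parametrized Riccati iteration and becomes routine once the convergence of $Z$ is secured.
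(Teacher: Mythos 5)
Your overall strategy coincides with the paper's: realize $Z$, $X$, $S$ as limits of the finite-horizon recursions, get monotonicity in $N$ from nonnegativity of the per-stage cost in mean/deviation form, get boundedness from the assumed stabilizing controller, and extract positivity from Assumption \ref{ass3}. Where you differ is in the bookkeeping, and your algebra is in one respect cleaner: the observation that $P_k:=Z_k+S_k$ satisfies a standalone standard Riccati recursion with weight $Q_\mu-\mu Q=Q$ and terminal value $G_\mu-\mu Q=G$ (the $K_k'\Upsilon_kK_k$ terms cancel and $M_k,N_k$ depend only on $P_{k+1}$) is exactly right and replaces the paper's device of evaluating the optimal cost at a deterministic initial state to show $Z_0(N)+S_0(N)$ is monotone and bounded. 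Likewise your Lyapunov rewriting $Z=(A-BK)'Z(A-BK)+K'RK+Q_\mu$ plus observability is a legitimate substitute for the paper's zero-cost contradiction argument for strict positivity.

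Two soft spots. First, your treatment of $X$ is the weakest link: for finite $N$ the $X$-recursion is driven by the time-varying $Z_{k}(N)$ through $\Theta_{k+1}$, so it is not literally a single parametrized Riccati iteration, and its monotonicity in $N$ is not evident from the recursion alone. The paper avoids this by working with $\Theta_0(N)$ directly: setting $C=I$, $v_k=0$ (the Riccati recursions do not depend on the observation model) makes the optimal finite-horizon cost equal to $\mathbb{E}[x_0'\Theta_0(N)x_0]$, whence $\Theta_0(N)$ is monotone and bounded, and $X_0(N)=\big(\Theta_0(N)-(1-p)Z_0(N)\big)/p$ converges once $Z_0(N)$ does. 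You should either adopt that cost interpretation for $\Theta_0(N)$ or supply a separate comparison argument for the $X$-iteration. Second, your shortcut $\Theta=(1-p)Z+pX\geq(1-p)Z>0$ silently assumes $p<1$; since $X\geq Q_\mu$ only gives positive semi-definiteness when $Q$ is singular, the case $p=1$ is not covered, whereas the paper's contradiction argument (zero cost forces $U_k^*=0$, $\tilde u_k^{L*}=0$ and $\mathcal{Q}^{1/2}$-annihilation, hence $x_0=0$ by observability) establishes $\Theta>0$ for all $p$. Your concern about converting mean-square stabilizability of the partially observed system into classical $(A,B)$-stabilizability is actually moot: like the paper, you only use the hypothesized controller to produce the upper bound on the cost, so no classical stabilizability of $(A,B)$ is ever needed.
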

\begin{proof}
	
	To make the time horizon $N$ explicit in the finite horizon case, we rewrite $Z_{k}$, $X_{k}$, $S_k$, $\Theta_k$, $\Upsilon_{k}$, $K_k$, $\bar{K}_k$, $M_k$, $N_k$, $\Lambda_k$, $L_k$, $\bar{J}(\mu)$ of (\ref{Z})-(\ref{DRE}) and (\ref{bar_J}) as $Z_{k}(N)$, $X_{k}(N)$, $S_k(N)$, $\Theta_k(N)$, $\Upsilon_{k}(N)$, $K_k(N)$, $\bar{K}_k(N)$, $M_k(N)$, $N_k(N)$, $\Lambda_k(N)$, $L_k(N)$ and $\bar{J}_N(\mu)$, respectively.

	First, we shall show $\Theta_0(N)$ and $Z_0(N)+S_0(N)$ are monotonically increasing with $N$. 
	From (\ref{obervation-1}) and (\ref{Z})-(\ref{S}), we know that the solutions to (\ref{Z})-(\ref{S}) are irrespective to the observation model.
	Hence, in our setting, it is valid to set $C=I$ and $v_{k}=0$, and (\ref{obervation-1}) becomes $y_{k}^{L}=x_{k}$, which implies $\hat{x}_{k \mid k}^{L}=x_{k}, \Sigma_{k \mid k}^{L}=0$. 
	Similarly, the equations (\ref{Z})-(\ref{S}) are irrespective with the initial value $x_0$, and we set $\bar{x}_0=0$ here. Then, (\ref{optimal_J}) becomes
	\begin{align}
		\bar{J}_{N}^{*}(\mu) & =\mathbb{E}\left\{x_0^{\prime}\big[Z_0(N)\hat{x}_{0\mid 0}^R+X_0(N)(\hat{x}_{0\mid 0}^L-\hat{x}_{0\mid 0}^R)+S_0(N)\mathbb{E}\hat{x}^R_{0\mid 0}\big]\right\}\nonumber \\
		& =\mathbb{E}\left[\eta_{0} x_{0}^{\prime} Z_{0}^{}(N) x_{0}+(1-\eta_{0})x_{0}^{\prime} X_{0}(N)x_0\right] \nonumber\\
		& =\mathbb{E}\left[x_{0}^{\prime} \Theta_{0}(N) x_{0}\right] \geq 0\label{x_0_0}.
	\end{align}
	Actually, since $\bar{J}_N^{*}(\mu)=\mathbb{E}\left[x_{0}^{\prime} \Theta_{0}(N) x_{0}\right] \leq \mathbb{E}\left[x_{0}^{\prime} \Theta_{0}(N+1)x_{0}\right]=\bar{J}_{N+1}^{*}(\mu)$ and the initial value $x_{0}$ is arbitrary, we can obtain that  $\Theta_{0}(N)\geq 0$ increases with respect to $N$. On the other hand, if $x_0=\mathbb{E}x_0$, i.e., $x_0$ is deterministic, we know that
	\begin{align}
		\bar{J}_{N}^{*}(\mu)=x_{0}^{\prime}(Z_0(N)+S_0(N))x_{0}\geq 0 \label{x_deter};
	\end{align}
	this yields that $Z_0(N)+S_0(N)$ also increases with respect to $N$.

	Next we shall show that $\Theta_{0}(N)$ and $Z_0(N)+S_0(N)$ are bounded. 
	From (\ref{inf_system}), we have
	\begin{align*}
		&\lim \limits_{k \rightarrow \infty} \mathbb{E}[x_{k}^{\prime} x_{k}]\\
		&=\lim \limits_{k \rightarrow \infty} \mathbb{E}\Big\{[(x_ {k}-\hat{x}_{k|k}^{L})+(\hat{x }_{k|k}^{L}-\hat{x}_{k|k}^{R})+ \hat{x}_{k|k}^{R}]^{\prime}[(x_ {k}-\hat {x}_{k|k}^{L})+(\hat{x }_{k|k}^{L}-\hat{x}_{k|k}^{R})+ \hat{x}_{k|k}^{R}]\Big\}\\
		&=\lim \limits_{k \rightarrow \infty} \Big[tr(\Sigma_{k \mid k}^{L})+\mathbb{E}(\hat{x}_{k \mid k}^{L}-\hat{x}_{k \mid k}^{R})^{\prime}(\hat{x}_{k \mid k}^{L}-\hat{x}_{k \mid k}^{R})+\mathbb{E}(\hat{x}_{k \mid k}^{R^{\prime}} \hat{x}_{k \mid k}^{R})\Big]\\
		&=0.
	\end{align*}
	Accordingly, we have $\lim\limits_{k\to\infty} \mathbb{E}(\hat{x}_{k \mid k}^{L}-\hat{x}_{k \mid k}^{R})^{\prime}(\hat{x}_{k \mid k}^{L}-\hat{x}_{k \mid k}^{R})=0$ and $\lim\limits_{k\to\infty}\mathbb{E}(\hat{x}_{k \mid k}^{R^{\prime}} \hat{x}_{k \mid k}^{R})=0$. From \cite{Bou99}, there exist constants $c_{1}>0, c_{2}>0$ and $c_{3}>0$ such that 
	\begin{align*}
	&\sum\limits_{k=0}^{\infty} \mathbb{E}\left(x_{k}^{\prime} x_{k}\right) \leq c_{1} \mathbb{E}\left(x_{0}^{\prime} x_{0}\right), \\ 
	&\sum\limits_{k=0}^{\infty} \mathbb{E}(\hat{x}_{k \mid k}^{R^{\prime}} \hat{x}_{k \mid k}^{R}) \leq c_{2} \mathbb{E}(\hat{x}_{0 \mid 0}^{R^{\prime}} \hat{x}_{0 \mid 0}^{R}), \\
	 &\sum\limits_{k=0}^{\infty}\mathbb{E}(\hat{x}_{k \mid k}^{L}-\hat{x}_{k \mid k}^{R})^{\prime}(\hat{x}_{k \mid k}^{L}-\hat{x}_{k \mid k}^{R})\leq c_{3} \mathbb{E}(\hat{x}_{0 \mid 0}^{L}-\hat{x}_{0 \mid 0}^{R})^{\prime}(\hat{x}_{0 \mid 0}^{L}-\hat{x}_{0 \mid 0}^{R}).
	\end{align*}
	Since $Q_\mu\geq 0$, $Q\geq 0$, $R> 0$ and $R^{L}> 0$,  there exists constant $\lambda$ such that 
	\begin{align*}
	&\begin{bmatrix}
		Q_\mu&0\\0&Q
	\end{bmatrix}\leq \lambda I, \\
	&\begin{bmatrix}
		K^{\prime}RK&0\\0&(K+\bar{K})^{\prime}R(K+\bar{K})
	\end{bmatrix}\leq \lambda I,
	\end{align*}
	and $L^{\prime}\Lambda^{-1^{\prime}}R^{L}\Lambda^{-1}L\leq \lambda I$. 
	Hence, we have
	\begin{align*}
		{J}_{\infty}(\mu)&= \sum\limits_{k=0}^{\infty}\mathbb{E}\Big\{x_k^{\prime}Q_{\mu}x_k-\mu \mathbb{E}x_k^{\prime}Q\mathbb{E}x_k+U_k^{\prime}RU_k^L+\tilde{u}_k^{L^\prime}R^L\tilde{u}_k^L\Big\}\\
		&=\sum\limits_{k=0}^{\infty}\mathbb{E}\Bigg\{\begin{bmatrix}
			x_k-\mathbb{E}x_k\\\mathbb{E}x_k
		\end{bmatrix}^{\prime}\begin{bmatrix}
			Q_\mu&0\\0&Q
		\end{bmatrix}\begin{bmatrix}
			x_k-\mathbb{E}x_k\\\mathbb{E}x_k
		\end{bmatrix}+(\hat{x}_{k\mid k}^L-\hat{x}_{k\mid k}^R)^{\prime}L^{\prime}\Lambda^{-1^{\prime}}R^{L}\Lambda^{-1}L(\hat{x}_{k\mid k}^L-\hat{x}_{k\mid k}^R)\\&\quad +
		\begin{bmatrix}
			\hat{x}_{k\mid k}^R-\mathbb{E}\hat{x}_{k\mid k}^R\\\mathbb{E}\hat{x}_{k\mid k}^R
		\end{bmatrix}^{\prime}\begin{bmatrix}
			K^{\prime}RK&0\\0&(K+\bar{K})^{\prime}R(K+\bar{K})
		\end{bmatrix}\begin{bmatrix}
			\hat{x}_{k\mid k}^R-\mathbb{E}\hat{x}_{k\mid k}^R\\\mathbb{E}\hat{x}_{k\mid k}^R
		\end{bmatrix}\Bigg\}\\
		&\leq \sum\limits_{k=0}^{\infty}\lambda\mathbb{E}\Bigg\{\begin{bmatrix}
			x_k-\mathbb{E}x_k\\\mathbb{E}x_k
		\end{bmatrix}^{\prime}\begin{bmatrix}
			x_k-\mathbb{E}x_k\\\mathbb{E}x_k
		\end{bmatrix}+(\hat{x}_{k\mid k}^L-\hat{x}_{k\mid k}^R)^{\prime}(\hat{x}_{k\mid k}^L-\hat{x}_{k\mid k}^R)\\
		&\hphantom{\leq}\,\, +
		\begin{bmatrix}
			\hat{x}_{k\mid k}^R-\mathbb{E}\hat{x}_{k\mid k}^R\\\mathbb{E}\hat{x}_{k\mid k}^R
		\end{bmatrix}^{\prime}\begin{bmatrix}
			\hat{x}_{k\mid k}^R-\mathbb{E}\hat{x}_{k\mid k}^R\\\mathbb{E}\hat{x}_{k\mid k}^R
		\end{bmatrix}\Bigg\}\\
		&\leq
		\lambda \big\{c_1\mathbb{E}(x_0^{\prime}x_0)+c_2\mathbb{E}(\hat{x}_{0 \mid 0}^{R^{\prime}}\hat{x}_{0 \mid 0}^R)+c_3\mathbb{E}(\hat{x}_{0 \mid 0}^L-\hat{x}_{0 \mid 0}^R)^{\prime}(\hat{x}_{0 \mid 0}^L-\hat{x}_{0 \mid 0}^R)\big\}\triangleq M.
	\end{align*}
	Thus, with (\ref{x_0_0}) we obtain 
	$\bar{J}_{N}^{*}(\mu)=\mathbb{E}\left[x_{0}^{\prime} \Theta_{0}(N) x_{0}\right]\leq J_{\infty}\leq M$, which implies the boundedness of $\Theta_{0}(N)$. Similarly, let the initial state be arbitrary deterministic, we can get $Z_{0}(N)+S_{0}(N)$ is bounded too. Hence, $\Theta_{0}(N)$ and $Z_{0}(N)+S_{0}(N)$ are convergent, i.e., there exists $\Theta$, $Z+S$ such that
	\begin{align*}
		&\lim_{k\to \infty}\Theta_{k}(N)=\lim_{k\to \infty}\Theta_{0}(N-k)=\Theta\geq 0,\\
		&\lim_{k\to \infty}Z_{k}(N)+S_k(N)=\lim_{k\to \infty}Z_{0}(N-k)+S_{0}(N-k)=Z+S \geq 0.
	\end{align*}
	Then, we shall prove that $Z_{k}(N)$, $X_{k}(N)$ and $S_{k}(N)$ are convergent. We set the channel failure probability  $p=0$ because $Z_k(N)$ is irrespective with $p$. Hence, we get $Z_k(N)$ is convergent due to the convergence of $\Theta_{k}(N)$. Accordingly, we obtain that $X_k(N)$ and $S_k(N)$ are convergent due to the convergence of $Z_k(N)$, $\Theta_{k}(N)$ and $Z_k(N)+S_k(N)$. Furthermore, in view of (\ref{DRE}), we know that $\Upsilon_{k}(N)$, $K_k(N)$, $\bar{K}_k(N)$, $M_k(N)$, $N_k(N)$, $\Lambda_k(N)$ and $L_k(N)$ are convergent.
	
	Finally, we shall show that $Z>0$, $Z+S>0$ and $\Theta>0$. We first prove $\Theta>0$, i.e., there exists $m>0$ satisfying $\Theta_0(m)>0$. 
	Assuming that the situation is not true, then there exists $x_0\neq0$, $\bar{x}_0=0$ such that
	\begin{align*}
	 \bar{J}_N^{*}(\mu)&=\sum\limits_{k=0}^{N}\mathbb{E}\big[(x_k^{*})^\prime Q_\mu x_k^{*}-\mu\mathbb{E}(x_k^{*})^\prime Q\mathbb{E}x_k^{*}+(U_k^{*})^{\prime}RU_k^{*}+(\tilde{u}_k^{L^{*}})^{\prime} R\tilde{u}_k^{L^*}\big]\\
	 &=\sum\limits_{k=0}^{N}\mathbb{E}\bigg\{\begin{bmatrix}
		x_k^*-\mathbb{E}x_k^*\\\mathbb{E}x_k^*
	\end{bmatrix}^{\prime}\begin{bmatrix}
		Q_\mu&0\\0&Q
	\end{bmatrix}\begin{bmatrix}
		x_k^*-\mathbb{E}x_k^*\\ \mathbb{E}x_k^*
	\end{bmatrix}+(U_k^{*})^{\prime}RU_k^{*}+(\tilde{u}_k^{L^{*}})^{\prime}R\tilde{u}_k^{L^*}\bigg\}\\
	&=\mathbb{E}\left[x_{0}^{\prime} \Theta_{0}(N) x_{0}\right]\\
	&= 0,
	\end{align*}
	where $x_k^{*}$, $U_k^{*}$ and $\tilde{u}_k^{L^*}$ are optimal state and optimal controllers. From Assumption \ref{ass2}, we get $D\begin{bmatrix}
		x_k^*-\mathbb{E}x_k^*\\\mathbb{E}x_k^*
	\end{bmatrix}=0$, $U_k^{*}=0$ and $\tilde{u}_k^{L^*}=0$. With Assumption \ref{ass3}, we obtain $x_0=x_k^*=0$, which contradicts the hypothesis. Hence, we get $\Theta>0$. On the other hand, we set the initial state be arbitrary deterministic, then we can obtain $Z+S>0$. Hence, if we set $p=0$, $Z>0$ is obtained. This completes the proof.
\end{proof}

In what follows, we shall consider Problem ($\mbox{FLQ}_\infty$), and first make the following  assumption.

\begin{assumption}\label{ass4}
	$(A,Q_\omega)$ is stabilizable.
\end{assumption}
\begin{lemma}\label{lemma5}
	For system (\ref{new-system-form}) with arbitrary initial value of state and under Assumption \ref{ass3} and  Assumption \ref{ass4}, the estimator error covariance matrix  $\Sigma_{k \mid k}^L=\mathbb{E}[(x_k-\hat{x}_{k\mid k}^L)(x_k-\hat{x}_{k\mid k}^L)^{\prime}]$ is asymptotic bounded, i.e., $\lim\limits_{k\to \infty}\Sigma_{k \mid k}^L=\Sigma^L$. 
	Furthermore, $\Sigma_{k \mid k}^R=\mathbb{E}[(x_k-\hat{x}_{k\mid k}^R)(x_k-\hat{x}_{k\mid k}^R)^{\prime}]$ is asymptotic bounded, i.e., $\lim\limits_{k\to \infty}\Sigma_{k \mid k}^R=\Sigma^R$, if and only if $\sqrt{p}|\lambda_{max}(A-B^L\Lambda^{-1}L)|<1$,  where $\lambda_{max}(A-B^L\Lambda^{-1}L)$ is the eigenvalue of matrix $A-B^L\Lambda^{-1}L$ with the largest absolute value.
	\begin{proof}
		The convergence of $\Sigma_{k \mid k}^L$ can be easily obtained from \cite{Maybeck80} under Assumption \ref{ass3} and Assumption \ref{ass4}, i.e., $\lim\limits_{k\to \infty}\Sigma_{k \mid k}^L=\Sigma^L$. From (\ref{es_x_R}), we get
		\begin{align*}
			x_k-\hat{x}_{k\mid k}^R&=x_k-\eta_k \hat{x}_{k\mid k}^L-(1-\eta_{k})\hat{x}_{k\mid k-1}^R\\
			&=x_k-\eta_k x_k+\eta_k x_k-\eta_k \hat{x}_{k\mid k}^L-(1-\eta_{k})\hat{x}_{k\mid k-1}^R\\
			&=(1-\eta_k)(x_k-\hat{x}_{k\mid k-1}^R)+\eta_k(x_k-\hat{x}_{k\mid k}^L).
		\end{align*}
		Then, with (\ref{new-system-form}), (\ref{optimal_u_k^R}), (\ref{optimal_u_k^L}) and Lemma \ref{es}, we have 
		\begin{align*}
			\Sigma_{k \mid k}^R&=\mathbb{E}\Big\{(1-\eta_k)^2\big((A-B^L\Lambda_k^{-1}L)\Sigma_{k-1\mid k-1}^R(A-B^L\Lambda_k^{-1}L)^{\prime}\\
			&\quad+B^L\Lambda_k^{-1}L\Sigma_{k-1 \mid k-1}^L(A-B^L\Lambda_k^{-1}L)^{\prime}+A\Sigma_{k-1 \mid k-1}^L(B^L\Lambda_k^{-1}L)^{\prime}\big)+\eta_{k}^2\Sigma_{k \mid k}^L\\
			&\quad+(1-\eta_k)\eta_kA\Sigma_{k-1 \mid k-1}^L(A-W_kCA)^{\prime}+(1-\eta_k)\eta_k(A-W_kCA)\Sigma_{k-1 \mid k-1}^LA^{\prime}\Big\}+pQ_{\omega}\\
			&=\sqrt{p}\big((A-B^L\Lambda_k^{-1}L)\Sigma_{k-1\mid k-1}^R(A-B^L\Lambda_k^{-1}L)^{\prime}\sqrt{p}\\
			&\quad+pB^L\Lambda_k^{-1}L\Sigma_{k-1 \mid k-1}^L(A-B^L\Lambda_k^{-1}L)^{\prime}+pA\Sigma_{k-1 \mid k-1}^L(B^L\Lambda_k^{-1}L)^{\prime}\big)+(1-p)\Sigma_{k \mid k}^L+pQ_{\omega}.
		\end{align*}
	As $\lim\limits_{k\to \infty}\Sigma_{k \mid k}^L=\Sigma^L$,  it can be readily obtained that $\lim\limits_{k\to \infty}\Sigma^R_{k\mid k}=\Sigma^R$ if and only if $\sqrt{p}|\lambda_{max}(A-B^L\Lambda^{-1}L)|<1$.
	\end{proof}
\end{lemma}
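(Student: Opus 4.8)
The plan is to handle the two covariances separately: the local error covariance is settled by classical filtering theory, while the remote error covariance reduces to the convergence of an affine matrix recursion whose contraction rate carries the stated spectral condition. First I would note that $\hat{x}^L_{k\mid k}$ is exactly the output of the standard Kalman filter driven by $y^L_k=Cx_k+v_k$, and that the additive, $\mathcal{F}^L_{k-1}$-measurable control terms $BU_{k-1}+B^L\tilde{u}^L_{k-1}$ do not enter the error dynamics. Hence $\Sigma^L_{k\mid k}$ obeys the Riccati recursion of Lemma~\ref{es}, and under detectability of $(A,C)$ (Assumption~\ref{ass3}) together with stabilizability of $(A,Q_\omega)$ (Assumption~\ref{ass4}) this recursion converges to a unique stabilizing solution for every $\Sigma_{init}$; this is precisely the setting of \cite{Maybeck80}, which I would cite to conclude $\Sigma^L_{k\mid k}\to\Sigma^L$.

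For $\Sigma^R_{k\mid k}$ the key step is to turn the estimator relation (\ref{es_x_R}) into a covariance recursion. Starting from $\hat{x}^R_{k\mid k}=\eta_k\hat{x}^L_{k\mid k}+(1-\eta_k)\hat{x}^R_{k\mid k-1}$ I would write
\[
x_k-\hat{x}^R_{k\mid k}=(1-\eta_k)(x_k-\hat{x}^R_{k\mid k-1})+\eta_k(x_k-\hat{x}^L_{k\mid k}),
\]
then propagate the one-step predictor error through (\ref{new-system-form}) with the optimal local control (\ref{optimal_u_k^L}), $\tilde{u}^L_{k-1}=-\Lambda_{k-1}^{-1}L_{k-1}(\hat{x}^L_{k-1\mid k-1}-\hat{x}^R_{k-1\mid k-1})$, which produces the closed-loop factor $\Phi_k:=A-B^L\Lambda_k^{-1}L_k$. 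Squaring and taking expectations, using that $\eta_k$ is Bernoulli and independent of the errors and noise so that $\eta_k(1-\eta_k)\equiv0$, $\mathbb{E}[(1-\eta_k)^2]=p$ and $\mathbb{E}[\eta_k^2]=1-p$, gives the affine recursion
\[
\Sigma^R_{k\mid k}=p\,\Phi_k\Sigma^R_{k-1\mid k-1}\Phi_k^{\prime}+\Xi_k,
\]
where $\Xi_k$ is assembled from $\Sigma^L_{\cdot}$, $Q_\omega$ and the cross terms displayed in the lemma's statement. By the first part and by Lemma~\ref{lemma4} one has $\Phi_k\to\Phi:=A-B^L\Lambda^{-1}L$ and $\Xi_k$ converges to a constant $\Xi\ge0$.

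It then remains to analyze the time-invariant iteration $\Sigma\mapsto p\,\Phi\Sigma\Phi^{\prime}+\Xi$. Its homogeneous part is the positive linear operator $\mathcal{T}(S)=p\,\Phi S\Phi^{\prime}$ on symmetric matrices, whose eigenvalues are $p\,\lambda_i(\Phi)\lambda_j(\Phi)$, so $\rho(\mathcal{T})=p\,|\lambda_{max}(\Phi)|^2$. If $\sqrt{p}\,|\lambda_{max}(\Phi)|<1$ then $\rho(\mathcal{T})<1$, the Neumann series for $(I-\mathcal{T})^{-1}$ converges, the iterates tend to the unique fixed point $\Sigma^R=(I-\mathcal{T})^{-1}\Xi$, and convergence of the genuine time-varying recursion follows from $\Phi_k\to\Phi$, $\Xi_k\to\Xi$ by a standard perturbation/asymptotic-stability argument. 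Conversely, if $\sqrt{p}\,|\lambda_{max}(\Phi)|\ge1$ then $\rho(\mathcal{T})\ge1$ and, since the forcing term contains $pQ_\omega$ with $(A,Q_\omega)$ stabilizable, the monotone positive iterates are unbounded, so $\Sigma^R_{k\mid k}$ fails to converge.

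I expect the main obstacle to be the covariance recursion itself: correctly tracking which cross terms survive after taking expectations—verifying that the $\eta_k(1-\eta_k)$ contributions vanish identically while the coupling between the local error $x_{k-1}-\hat{x}^L_{k-1\mid k-1}$ and the gap $\hat{x}^L_{k-1\mid k-1}-\hat{x}^R_{k-1\mid k-1}$ produces exactly the $p$-scaled Lyapunov operator $p\,\Phi_k(\cdot)\Phi_k^{\prime}$. A secondary delicate point is the ``only if'' direction, where one must invoke the intermittent-observation divergence phenomenon—that a positive affine recursion with nontrivial forcing is unbounded once $\rho(\mathcal{T})\ge1$—rather than mere loss of contraction.
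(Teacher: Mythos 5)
Your proposal follows essentially the same route as the paper's proof: convergence of $\Sigma^L_{k\mid k}$ from standard Kalman filtering theory under Assumptions \ref{ass3}--\ref{ass4}, the decomposition $x_k-\hat{x}^R_{k\mid k}=(1-\eta_k)(x_k-\hat{x}^R_{k\mid k-1})+\eta_k(x_k-\hat{x}^L_{k\mid k})$, and a covariance recursion of the form $\Sigma^R_{k\mid k}=p\,\Phi_k\Sigma^R_{k-1\mid k-1}\Phi_k^{\prime}+\Xi_k$ with $\Phi_k=A-B^L\Lambda_k^{-1}L_k$. The only difference is that you actually justify the final equivalence (spectral radius $p|\lambda_{\max}(\Phi)|^2$ of the Lyapunov operator, Neumann-series convergence, and unboundedness under nontrivial forcing when $\rho\geq 1$), a step the paper dismisses as ``readily obtained,'' so your write-up is, if anything, more complete than the original.
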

We are now in the position to present the main results of this section.
\begin{theorem}
	Under Assumption \ref{ass2}, Assumption \ref{ass3} and Assumption \ref{ass4}, the system (\ref{new-system-form}) is bounded in the mean-square sense, if and only if there exist solutions $Z$, $X$, $S$ to (\ref{inf_Z}), (\ref{inf_X}) and (\ref{inf_S}) such that $Z>0$, $Z+S>0$, and $\Theta>0$ and $\sqrt{p}|\lambda_{max}(A-B^L\Lambda^{-1}L)|<1$.
	In this case, the optimal stabilizing controllers are as
	\begin{align}
		U_k^*&=-K\hat{x}^{R*}_{k\mid k}-\bar{K}\mathbb{E}\hat{x}^{R*}_{k\mid k}\label{inf_U},\\
		\tilde{u}_k^{L*}&=-\Lambda^{-1}L(\hat{x}^{L*}_{k\mid k}-\hat{x}^{R*}_{k\mid k})\label{inf_u_L}.
	\end{align}
\end{theorem}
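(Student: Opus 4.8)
The plan is to reduce the mean-square boundedness of the noisy system $(\ref{new-system-form})$ to three decoupled boundedness questions via the orthogonal decomposition used throughout, and then dispatch each piece with Lemma $\ref{lemma4}$ and Lemma $\ref{lemma5}$. First I would write, by the orthogonality of the innovation-type increments,
\begin{align*}
	\mathbb{E}[x_k'x_k]=\mbox{tr}(\Sigma_{k\mid k}^L)+\mathbb{E}[(\hat{x}_{k\mid k}^L-\hat{x}_{k\mid k}^R)'(\hat{x}_{k\mid k}^L-\hat{x}_{k\mid k}^R)]+\mathbb{E}[\hat{x}_{k\mid k}^{R'}\hat{x}_{k\mid k}^R],
\end{align*}
so that $\{x_k\}$ is mean-square bounded if and only if each of the three nonnegative terms stays bounded. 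The first term $\mbox{tr}(\Sigma_{k\mid k}^L)$ is always bounded under Assumptions $\ref{ass3}$ and $\ref{ass4}$ by the convergence $\Sigma_{k\mid k}^L\to\Sigma^L$ of Lemma $\ref{lemma5}$. Since $\Sigma_{k\mid k}^R=\Sigma_{k\mid k}^L+\mathbb{E}[(\hat{x}_{k\mid k}^L-\hat{x}_{k\mid k}^R)(\hat{x}_{k\mid k}^L-\hat{x}_{k\mid k}^R)']$ by the same orthogonality, the second term is bounded precisely when $\Sigma_{k\mid k}^R$ converges, which Lemma $\ref{lemma5}$ characterizes as $\sqrt{p}\,|\lambda_{max}(A-B^L\Lambda^{-1}L)|<1$. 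This isolates the eigenvalue condition in the statement and leaves only the remote-estimate second moment $\mathbb{E}[\hat{x}_{k\mid k}^{R'}\hat{x}_{k\mid k}^R]$ to be controlled by the algebraic Riccati conditions.

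Second, I would tie the boundedness of $\mathbb{E}[\hat{x}_{k\mid k}^{R'}\hat{x}_{k\mid k}^R]$ to the asymptotic mean-square stability of the corresponding noise-free closed loop. Substituting the candidate controllers $(\ref{inf_U})$ and $(\ref{inf_u_L})$ into the estimator recursions of Lemma $\ref{es}$ produces a closed-loop recursion for $\hat{x}_{k\mid k}^R$ whose mean evolves by $A-B(K+\bar{K})$ and whose fluctuation evolves by $A-BK$, driven by the already-bounded innovation and difference terms. For the \textbf{sufficiency} direction I would use the solutions $Z$, $X$, $S$ of $(\ref{inf_Z})$--$(\ref{inf_S})$ to build the Lyapunov function $V(k)=\mathbb{E}\{x_k'Z\hat{x}_{k\mid k}^R+x_k'X(\hat{x}_{k\mid k}^L-\hat{x}_{k\mid k}^R)+x_k'S\mathbb{E}x_k\}$, exactly as in $(\ref{V_k})$; the one-step difference $V(k)-V(k+1)$ telescopes as in the proof of Theorem $\ref{theorem2}$, and the positivity $Z>0$, $Z+S>0$, $\Theta>0$ together with $R>0$, $R^L>0$ forces a strict decrease modulo the bounded noise contribution $C_k$, yielding a uniform bound on $\mathbb{E}[\hat{x}_{k\mid k}^{R'}\hat{x}_{k\mid k}^R]$ and hence on $\mathbb{E}[x_k'x_k]$.

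For the \textbf{necessity} direction I would argue that mean-square boundedness of $\{x_k\}$ forces each orthogonal component to be bounded, so in particular the noise-free closed loop is asymptotically mean-square stable and $\sqrt{p}\,|\lambda_{max}(A-B^L\Lambda^{-1}L)|<1$ by Lemma $\ref{lemma5}$; the persistent excitation by $\omega_k$ across all modes is what converts boundedness into stability here. The existence of stabilizing controllers then lets me invoke Lemma $\ref{lemma4}$ to produce solutions $Z$, $X$, $S$ of the algebraic Riccati equations with $Z>0$, $Z+S>0$ and $\Theta>0$; concretely I would pass to the limit in the monotone, bounded finite-horizon Riccati iterates $Z_0(N)$, $X_0(N)$, $S_0(N)$ established in Lemma $\ref{lemma4}$, identify their limits as the required algebraic solutions, and check that the limiting controllers coincide with $(\ref{inf_U})$--$(\ref{inf_u_L})$. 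The main obstacle I anticipate is the coupling introduced by the mean-field term $\mathbb{E}x_k$: the mean and the fluctuation of $\hat{x}_{k\mid k}^R$ obey the different closed-loop matrices $A-B(K+\bar{K})$ and $A-BK$, so the Lyapunov estimate and the convergence of the Riccati iterates must be run simultaneously for both modes, and one must verify that the $\eta_k$-randomness in the remote estimator does not break the telescoping identity; this is where Assumption $\ref{ass2}$ and the observability of $(A,\mathcal{Q}^{\frac{1}{2}})$ in Assumption $\ref{ass3}$ are essential, since they rule out a nonzero mode that is stable yet invisible to the cost.
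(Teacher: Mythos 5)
Your overall architecture matches the paper's: orthogonal decomposition of $\mathbb{E}[x_k^{\prime}x_k]$ into the three components, Lemma \ref{lemma5} for the condition $\sqrt{p}|\lambda_{max}(A-B^L\Lambda^{-1}L)|<1$, Lemma \ref{lemma4} for the Riccati solutions, and a completion-of-squares/Lyapunov argument for sufficiency and optimality. But there is one genuine gap in your necessity direction. To invoke Lemma \ref{lemma4} you need its hypothesis, namely that the \emph{noise-free} system (\ref{inf_system}) is stabilizable in the mean-square sense (i.e.\ $\mathbb{E}[x_k^{\prime}x_k]\to 0$), whereas what you are given is only that the \emph{noisy} system (\ref{new-system-form}) is bounded in the mean-square sense. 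Boundedness of each orthogonal component does not by itself yield asymptotic stability of the noise-free closed loop; the conversion is a separate result, which the paper obtains by citing Theorem 2 of \cite{Xiao20} (proved there by splitting $x_k=z_k+s_k$ into a noise-free part and an orthogonal noise-driven part). Your appeal to ``persistent excitation by $\omega_k$ across all modes'' names the right phenomenon but is not an argument; as written, Lemma \ref{lemma4} cannot yet be applied.

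A second, smaller issue is in your sufficiency step. After using orthogonality, the Lyapunov function $V(k)$ of (\ref{V_k}) equals $\mathbb{E}[(\hat{x}^R_{k\mid k}-\mathbb{E}x_k)^{\prime}Z(\hat{x}^R_{k\mid k}-\mathbb{E}x_k)]+\mathbb{E}[(\hat{x}^L_{k\mid k}-\hat{x}^R_{k\mid k})^{\prime}X(\hat{x}^L_{k\mid k}-\hat{x}^R_{k\mid k})]+\mathbb{E}x_k^{\prime}(Z+S)\mathbb{E}x_k$; the hypotheses give positivity of $Z$, $Z+S$ and $\Theta=(1-p)Z+pX$ but not of $X$ alone, so $V$ is not obviously bounded below by a positive quadratic in the quantities you need to control. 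Moreover, an identity of the form $V(k+1)=V(k)-\mathbb{E}[\text{stage cost}]+C_k$ with only positive semi-definite stage cost and a nonvanishing noise term does not by itself yield a uniform bound; one needs either a contraction or the observability argument. The paper sidesteps both points by first disposing of the $(\hat{x}^L-\hat{x}^R)$ component via the convergence of $\Sigma^R_{k\mid k}$, and then running two separate Lyapunov functions $\mathbb{E}[\alpha_k^{\prime}Z\alpha_k]$ and $\mathbb{E}[\varphi_k^{\prime}(Z+S)\varphi_k]$ for the deterministic systems $\alpha_{k+1}=(A-BK)\alpha_k$ and $\varphi_{k+1}=(A-BM^{-1}N)\varphi_k$, using the rewritten Riccati identities (\ref{re_Z})--(\ref{re_Z_S}) together with Assumptions \ref{ass2} and \ref{ass3} to conclude $\alpha_k,\varphi_k\to 0$. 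You should restructure your sufficiency argument along those lines, or at least justify why the $X$-term can be handled separately.
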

\begin{proof}
	\textbf{"Sufficiency"}. Under Assumption \ref{ass2}-\ref{ass4}, if there exist solutions $Z$, $X$, $S$ to (\ref{inf_Z}), (\ref{inf_X}) and (\ref{inf_S}) such that $Z>0$, $Z+S>0$ and $\Theta>0$, $\sqrt{p}|\lambda_{max}(A-B^L)|<1$, we shall show that (\ref{new-system-form}) is bounded in the mean-square sense under controllers (\ref{inf_U}) and (\ref{inf_u_L}). Noting (\ref{new-system-form}), (\ref{inf_U}) and (\ref{inf_u_L}), we can get
	\begin{align}
		&x_{k+1}-\mathbb{E}x_{k+1}\nonumber=(A-BK)(x_k-\mathbb{E}x_k)+BK(x_k-\hat{x}_{k\mid k}^R)-B^L\Lambda_k^{-1}L(\hat{x}_{k\mid k}^L-\hat{x}_{k\mid k}^R)+\omega_k.\nonumber
	\end{align}
	Then,
	\begin{align*}
		&\mathbb{E}\big[(x_{k+1}-\mathbb{E}x_{k+1})^{\prime}(x_{k+1}-\mathbb{E}x_{k+1})\big]\\
		&=\mathbb{E}\big[(x_k-\mathbb{E}x_k)^{\prime}(A-BK)^{\prime}(A-BK)(x_k-\mathbb{E}x_k)\big]\\
		&\quad+tr\Big\{\Sigma_{k\mid k}^{R}\big((A-BK)^{\prime}(BK-B^{L}\Lambda^{-1}L)+K^{\prime}B^{\prime}A-(B^{L}\Lambda^{-1}L)^{\prime}(A-BK)\big)\Big\}\\
		&\quad+tr\Big\{(\Sigma_{k\mid k}^{R}-\Sigma_{k\mid k}^{L})\big((B^{L}\Lambda^{-1}L)^{\prime}(B^{L}\Lambda^{-1}L-BK)+K^{\prime}B^{\prime}B^{L}\Lambda^{-1}L\big)\Big\}+Q_\omega,\\
		&\mathbb{E}x_{k+1}^{\prime}\mathbb{E}x_{k+1}\\
		&=\mathbb{E}x_k^{\prime}(A-BM^{-1}N)^{-1}(A-BM^{-1}N)\mathbb{E}x_k.
	\end{align*}
	Hence, with the convergence of $\Sigma_{k\mid k}^{R}$ and $\Sigma_{k\mid k}^{L}$, it can be known that $\lim\limits_{k\to \infty}\mathbb{E}(x_{k+1}^{\prime}x_{k+1})=\lim\limits_{k\to \infty}\mathbb{E}\big[(x_{k+1}-\mathbb{E}x_{k+1})^{\prime}(x_{k+1}-\mathbb{E}x_{k+1})\big]+\lim\limits_{k\to \infty}\mathbb{E}x_{k+1}^{\prime}\mathbb{E}x_{k+1}$ is bounded in the mean-square sense if and only if the following linear systems:
	\begin{align}
		\alpha_{k+1}&=(A-BK)\alpha_{k},\label{alpha}\\
		\varphi_{k+1}&=(A-BM^{-1}N)\varphi_{k}\label{varphi}
	\end{align}
	with the initial state $\alpha_0=\varphi_0=x_0$, are stable in the mean-square sense.
	To this end, we rewrite (\ref{inf_Z}) and (\ref{inf_S}) as
	\begin{align}
		Z&=K^{\prime}RK+Q_\mu+(A-BK)^{\prime}Z(A-BK),\label{re_Z}\\
		Z+S&=N^{\prime}M^{-1^{\prime}}RM^{-1}N+Q+(A-BM^{-1}N)^{\prime}(Z+S)(A-BM^{-1}N)\label{re_Z_S}.
	\end{align}
	Now we shall present that (\ref{alpha}) and (\ref{varphi}) are stable in the mean-square sense.
	Letting the Lyapunov functions $W^1_k=\mathbb{E}(\alpha^{\prime}_kZ\alpha)$, $W^2_k=\mathbb{E}\big[\varphi^{\prime}_k(Z+S)\varphi_k\big]$ and noting ($\ref{re_Z}$) and ($\ref{re_Z_S}$), we can obtain
	\begin{align*}
		W_{k+1}^1-W_k^1&=\mathbb{E}\Big\{\alpha_k^{\prime}[(A-BK)^{\prime}Z(A-BK)-Z]\alpha_k\Big\}\\
		&=-\mathbb{E}\Big\{\alpha_k^{\prime}(K^{\prime}RK+Q_\mu)\alpha_k\Big\},\\
		W_{k+1}^2-W_k^2&=\mathbb{E}\Big\{\varphi_k^{\prime}[(A-BM^{-1}N)^{\prime}(Z+S)(A-BM^{-1}N)-(Z+S)]\varphi_k\Big\}\\
		&=-\mathbb{E}\Big\{\varphi_k^{\prime}(N^{\prime}M^{-1^{\prime}}RM^{-1}N+Q)\varphi_k\Big\};
	\end{align*}
	this means that $W_k^1$ and $W_k^2$ decrease with respect to $k$. Owing to the semi-definite positiveness of $Z$ and $Z+S$, it can be known that $W_k^1$ and $W_k^2$ are bounded below. Hence, $W_k^1$ and $W_k^2$ are convergent; this implies $\lim\limits_{m\to \infty}\mathbb{E}\big[\alpha_m^{\prime}(K^{\prime}RK+Q_\mu)\alpha_m\big]=0$ and $\lim\limits_{m\to \infty}\mathbb{E}\big[\varphi_m^{\prime}(N^{\prime}m^{-1^{\prime}}RM^{-1}N+Q)\varphi_m\big]=0$. Then, we get $\lim\limits_{m\to \infty}\mathbb{E}[\alpha_m^{\prime}\alpha_m]=0$ and $\lim\limits_{m^{\prime}\to \infty}\mathbb{E}[\varphi_m^{\prime}\varphi_m]=0$. Hence, (\ref{new-system-form}) is bounded in the mean-square sense.
	
	Now we shall show that (\ref{inf_U}) and (\ref{inf_u_L}) are the optimal controllers. Define
	\begin{align*}
		\tilde{V}_k&=\mathbb{E}\big[x_k^{\prime}Z\hat{x}_{k\mid k}^R+x_k^{\prime}X_k(\hat{x}_{k\mid k}^L-\hat{x}_{k\mid k}^R)+x_k^{\prime}S\mathbb{E}x_k\big]\\
		&\quad +\mathbb{E}\sum_{i=k}^{\infty}\Big\{(x_i-\hat{x}_{i\mid i}^L)^{\prime}[(1-p)A^{\prime}ZWCA+pA^{\prime}C^{\prime}WXWCA+Q_\omega](x_i-\hat{x}_{i\mid i}^L)\Big\}\\
		&\quad+\sum_{i=k}^{\infty}tr\big\{Q_\omega[(1-p)Z_{k+1}W_{k+1}C+pC^{\prime}W_{k+1}C]+pQ_vW_{k+1}X_{k+1}W_{k+1}\big\}.
	\end{align*}
	Then,
	\begin{align*}
		\tilde{V}_k-\tilde{V}_{k+1}=&\mathbb{E}\Big\{x_k^{\prime}Q_\mu x_k-\mu x_k^{\prime}Q\mathbb{E}x_k+U_k^{\prime}RU_k+(\tilde{u}_k^{L})^{\prime}R^L\tilde{u}_k^L\\
		&-\big(U_k-\mathbb{E}U_k+K_k(x_k-\mathbb{E}x_k)\big)^{\prime}\Upsilon_k\big(U_k-\mathbb{E}U_k+K_k(x_k-\mathbb{E}x_k)\big)\\
		&-\big(\mathbb{E}U_k+(K_k+\bar{K}_k)\mathbb{E}x_k\big)^{\prime}M_k\big(\mathbb{E}U_k+(K_k+\bar{K}_k)\mathbb{E}x_k\big)\\
		&-\big(\tilde{u}_k^L+\Lambda_k^{-1}L_k(\hat{x}_{k\mid k}^L-\hat{x}_{k\mid k}^R)\big)^{\prime}\Lambda_k\big(\tilde{u}_k^L+\Lambda_k^{-1}L_k(\hat{x}_{k\mid k}^L-\hat{x}_{k\mid k}^R)\big)\Big\}.
	\end{align*}
	Taking summation from $k=0$ to $k=N$ on the both sides of the above equation, the cost function (\ref{inf_J}) becomes
	\begin{align*}
		\bar{J}_{\infty}(\mu)=&\mathbb{E}\Big\{\big(U_k-\mathbb{E}U_k+K_k(x_k-\mathbb{E}x_k)\big)^{\prime}\Upsilon_k\big(U_k+\mathbb{E}U_k+K_k(x_k-\mathbb{E}x_k)\big)\\
		&+\big(\mathbb{E}U_k+(K_k+\bar{K}_k)\mathbb{E}x_k\big)^{\prime}M_k\big(\mathbb{E}U_k+(K_k+\bar{K}_k)\mathbb{E}x_k\big)\\
		&+\big(\tilde{u}_k^L+\Lambda_k^{-1}L_k(\hat{x}_{k\mid k}^L-\hat{x}_{k\mid k}^R)\big)^{\prime}\Lambda_k\big(\tilde{u}_k^L+\Lambda_k^{-1}L_k(\hat{x}_{k\mid k}^L-\hat{x}_{k\mid k}^R)\big)\Big\}\\
		&+tr\big\{\Sigma^L[(1-p)A^{\prime}ZWCA+pA^{\prime}C^{\prime}WXWCA+Q_\omega]\big\}\\
		&+tr\{Q_\omega[(1-p)Z_{k+1}W_{k+1}C+pC^{\prime}W_{k+1}C]+pQ_vW_{k+1}X_{k+1}W_{k+1}\}.
	\end{align*}
	Due to $M>0$, $\Upsilon>0$ and $\Lambda>0$, the optimal controller can be obtained as $(\ref{inf_U})$ and $(\ref{inf_u_L})$.
	
	\textbf{"Necessity"}:
	Suppose that (\ref{new-system-form}) is bounded in the mean-square sense, we will show that there exist solutions $Z$, $X$, $S$ to (\ref{inf_Z}), (\ref{inf_X}) and (\ref{inf_S}) such that $Z>0$, $Z+S>0$ and $\Theta>0$ and $\sqrt{p}\left|\lambda_{\max }\left(A-B^{L} \Lambda^{-1} L\right)\right|<1$.
	From Theorem 2 of \cite{Xiao20}, we know: the fact that system (\ref{new-system-form}) is bounded in the mean-square sense is equivalent to that system (\ref{inf_system}) is stabilizable in the mean-square sense. From Lemma \ref{lemma4}, if the system (\ref{inf_system}) is stabilizable in the mean-square sense, then Riccati equations (\ref{inf_Z}), (\ref{inf_X}) and (\ref{inf_S}) admit solution $Z$, $X$ and $S$ satisfying $Z>0$, $Z+S>0$ and $\Theta>0$. Therefore, if system (\ref{new-system-form}) is bounded in the mean-square sense, the same conclusion can be obtained.
	
	Now we shall show that $\sqrt{p}\left|\lambda_{\max }\left(A-B^L\Lambda^{-1}L\right)\right|<1$. Clearly, if system (\ref{new-system-form}) is bounded in the mean-square sense, it follows that $\lim \limits_{k \rightarrow \infty} \mathbb{E}\left[x_{k}^{\prime} x_{k}\right]$ is bounded. Then we have
	\[\begin{array}{l}
		\lim \limits_{k \rightarrow \infty} \mathbb{E}\left[x_{k}^{\prime} x_{k}\right] 
		=\lim \limits_{k \rightarrow \infty} \mathbb{E}\Big\{\big[(x_{k}-\hat{x}_{k \mid k}^{R})+\hat{x}_{k \mid k}^{R}\big]^{\prime}\big[(x_{k}-\hat{x}_{k \mid k}^{R})+\hat{x}_{k \mid k}^{R}\big]\Big\} \\
		\hphantom{\lim \limits_{k \rightarrow \infty} \mathbb{E}\left[x_{k}^{\prime} x_{k}\right] }=\lim \limits_{k \rightarrow \infty} {tr}\Sigma_{k \mid k}^{R}+\lim \limits_{k \rightarrow \infty} \mathbb{E}[\hat{x}_{k \mid k}^{R^{\prime}} \hat{x}_{k \mid k}^{R}].
	\end{array}\]
	Hence, if $\lim \limits_{k \rightarrow \infty} tr\Sigma_{k \mid k}^{R}$ exists, then $\lim \limits_{k \rightarrow \infty} \Sigma_{k \mid k}^{R}$ is convergent. Noting Lemma \ref{lemma5} and under Assumption \ref{ass3} and Assumption \ref{ass4}, $\Sigma_{k \mid k}^{R}$ is asymptotic bounded if and only if $\sqrt{p}\left|\lambda_{\max }\left(A-B^L\Lambda^{-1}L\right)\right|<1$. This completes the proof of the necessity.
\end{proof}

Up to the present, we have found the optimal local and remote controllers when
$\mu$ is fixed for the finite-horizon case and infinite-horizon case, respectively. Next, in the next section, we shall show how to get the optimal multiplier $\mu^{*}$.

\section{Recovery of Primal-Optimal Solutions}

For any fixed $\mu\geq 0$, note the optimal control  $u^{\ast}(\mu)=(u_{0:N}^{L*},u_{0:N}^{R*})$ with $u_{k}^{L*},u_{k}^{R*}$ defined in (\ref{optimal_u_k^R}) and (\ref{optimal_u_k^L}). The following result is about the optimal Lagrangian multiplier $\mu^{\ast}$, which can be derived by standard optimization theory. 

\begin{theorem}\label{re}
	
Define the multiplier
	\begin{equation}
		\mu^{\ast}\triangleq \inf\{\mu \geq 0:J_R(u^{\ast}(\mu)\leq{\epsilon}\}.
	\end{equation}
	If $\mu^{\ast}$ is finite, then the policy $u^{\ast}(\mu^{\ast})$ is optimal for the primal Problem (CLQ).
\end{theorem}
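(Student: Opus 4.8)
The plan is to verify the three optimality conditions of Theorem~\ref{Optimality Conditions} for the candidate pair $(u^{\ast}(\mu^{\ast}),\mu^{\ast})$ and then to invoke that theorem directly. Throughout, write $\phi(\mu)\triangleq J_R(u^{\ast}(\mu))$ for the risk incurred by the control that is optimal for the fixed multiplier $\mu$. Condition~1) holds by construction: for every $\mu\geq 0$, the control $u^{\ast}(\mu)$ assembled in (\ref{optimal_u_k^R})--(\ref{optimal_u_k^L}) is exactly the minimizer of $\bar{J}(\mu)$, and hence of $\mathcal{L}(\cdot,\mu)$, so $\mathcal{L}(u^{\ast}(\mu^{\ast}),\mu^{\ast})=\min_{u}\mathcal{L}(u,\mu^{\ast})=D(\mu^{\ast})$ with no further work. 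It then remains to establish primal feasibility $\phi(\mu^{\ast})\leq\epsilon$ (condition~2) and complementary slackness $\mu^{\ast}(\phi(\mu^{\ast})-\epsilon)=0$ (condition~3), both of which I would obtain from the qualitative behaviour of $\phi$.

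The first key step is to show that $\phi$ is non-increasing in $\mu$. Taking $0\leq\mu_1<\mu_2$ and abbreviating $u_i=u^{\ast}(\mu_i)$, optimality of each $u_i$ for its own multiplier gives $\mathcal{L}(u_1,\mu_1)\leq\mathcal{L}(u_2,\mu_1)$ and $\mathcal{L}(u_2,\mu_2)\leq\mathcal{L}(u_1,\mu_2)$. Adding these and cancelling the common $J$-terms leaves $(\mu_1-\mu_2)\big(\phi(\mu_1)-\phi(\mu_2)\big)\leq 0$, and since $\mu_1-\mu_2<0$ this forces $\phi(\mu_1)\geq\phi(\mu_2)$. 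Consequently the set $\{\mu\geq 0:\phi(\mu)\leq\epsilon\}$ is an interval unbounded above whose infimum is precisely $\mu^{\ast}$.

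The second key step is continuity of $\phi$, which I would derive from the explicit structure already available. The gains $K_k,\bar{K}_k,\Lambda_k^{-1}L_k$ defining $u^{\ast}(\mu)$ are rational functions of the Riccati iterates $Z_k,X_k,S_k$, which depend continuously (indeed affinely through $Q_\mu$ and $G_\mu$) on $\mu$ wherever $M_k,\Upsilon_k,\Lambda_k>0$, the exact solvability region identified in Theorem~\ref{theorem2}. Since $J_R(u^{\ast}(\mu))$ is a quadratic functional of the resulting closed-loop state, it is a continuous function of these gains, hence of $\mu$. With monotonicity and continuity in hand, conditions~2) and~3) follow: for $\mu$ slightly above $\mu^{\ast}$ one has $\phi(\mu)\leq\epsilon$, so letting $\mu\downarrow\mu^{\ast}$ gives $\phi(\mu^{\ast})\leq\epsilon$, which is condition~2); for condition~3), the case $\mu^{\ast}=0$ is immediate, while if $\mu^{\ast}>0$ then $\phi(\mu)>\epsilon$ for every $\mu<\mu^{\ast}$, so letting $\mu\uparrow\mu^{\ast}$ yields $\phi(\mu^{\ast})\geq\epsilon$, whence $\phi(\mu^{\ast})=\epsilon$ and $\mu^{\ast}(\phi(\mu^{\ast})-\epsilon)=0$. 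Theorem~\ref{Optimality Conditions} then gives that $u^{\ast}(\mu^{\ast})$ is optimal for Problem~(CLQ).

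The step I expect to be the main obstacle is the continuity of $\phi$ at $\mu^{\ast}$, together with the tacit requirement that $u^{\ast}(\mu)$ actually exists, i.e.\ that the definiteness conditions $M_k,\Upsilon_k,\Lambda_k>0$ persist for $\mu$ in a full two-sided neighbourhood of $\mu^{\ast}$ and not merely at $\mu^{\ast}$ itself; otherwise the one-sided limits used above are not legitimate. I would handle this by observing that these definiteness conditions are open in $\mu$, so that the backward Riccati recursion remains well posed across the relevant range of multipliers and $\phi$ is genuinely continuous there. A secondary subtlety is that the infimum defining $\mu^{\ast}$ is only assumed finite and need not a priori be attained; it is exactly the combination of monotonicity and continuity that guarantees attainment and justifies evaluating $\phi$ at $\mu^{\ast}$.
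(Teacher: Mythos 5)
Your proposal is correct and follows essentially the route the paper intends: the paper omits the proof and defers to Theorem 3 of \cite{Tsiamis20}, whose argument is exactly this verification of the three conditions of Theorem \ref{Optimality Conditions}, with condition 1) holding by construction and conditions 2)--3) obtained from the monotonicity (via the standard exchange inequality) and continuity of $\mu \mapsto J_R(u^{\ast}(\mu))$. The caveats you flag --- that $u^{\ast}(\mu)$ must exist and the definiteness conditions $M_k,\Upsilon_k,\Lambda_k>0$ must persist in a neighbourhood of $\mu^{\ast}$ so that the one-sided limits are legitimate --- are tacit in the paper as well, so your write-up is, if anything, more explicit than the source.
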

\begin{proof}
	The proof is similar to that of [\cite{Tsiamis20},Theorem 3]. Thus we omit here.
\end{proof}
Theorem \ref{re} implies that we can find an optimal Lagrangian multiplier $\mu^{*}$ by performing the simple bisection on $\mu$  (\cite{Tsiamis20}).
In the process of finding $\mu^{*}$, we can get the value
of $J_R(u^{*}(\mu))$ through the following theorem.
\begin{theorem}
	For fixed $\mu\geq 0$,  $J_R(u^{*}(\mu))$ is expressed as
	\begin{align}
		J_R(u^{*}(\mu))=\mathbb{E}\Big\{x_0^{\prime}O_0\hat{x}_{0\mid 0}^R+x_0^{\prime}P_0(\hat{x}_{k\mid k}^L-\hat{x}_{0\mid 0}^R)+x_0^{\prime}W_0\mathbb{E}x_0\Big\}+q_0,
	\end{align}
where $O_k$, $P_k$, $W_k$ and $q_k$, $k=0,\cdots N+1$
are evaluated through the backward recursions
\begin{align*}
	O_k&=Q+A^{\prime}O_{k+1}A-2K_k^{\prime}B^{\prime}O_{k+1}A+K_k^{\prime}B^{\prime}O_{k+1}B,\\
	P_k&=(1-p)A^{\prime}O_{k+1}A-2(1-p)L_k^{\prime}\Lambda_k^{-1^{\prime}}B^LO_{k+1}A+(1-p)L_k^{\prime}\Lambda_k^{-1^{\prime}}B^{L^{\prime}}O_{k+1}\Lambda_k^{-1}L_k\\
	&\quad +pA^{\prime}P_{k+1}A-2pA^{\prime}P_{k+1}B^L\Lambda_k^{-1}L_k+pA^{\prime}O_{k+1}A\\
	&\quad+(1-p)L_k^{\prime}\Lambda_k^{-1^{\prime}}B^{L^{\prime}}P_{k+1}\Lambda_k^{-1}L_k+Q,\\
	W_k&=-Q-\bar{K}_k^{\prime}B^{\prime}O_{k+1}A+2K_k^{\prime}B^{\prime}O_{k+1}B\bar{K}_k+\bar{K}_k^{\prime}B^{\prime}O_{k+1}B\bar{K}_k+A^{\prime}W_{k+1}A\\
	&\quad-K_k^{\prime}B^{\prime}W_{k+1}A-2\bar{K}_kB^{\prime}W_{k+1}A+K_k^{\prime}B^{\prime}W_{k+1}BM_{k}^{-1}N_k+\bar{K}_k^{\prime}BW_{k+1}BM_{k}^{-1}N_k,\\
	q_k&=tr(\Sigma_{k\mid k}^L(Q+A^{\prime}O_{k+1}A))+q_{k+1}
\end{align*}
with $O_{N+1}=P_{N+1}=Q$, $W_{N+1}=-Q$ and $q_{N+1}={tr}(\Sigma_{N+1\mid N+1}^LQ)$.
\end{theorem}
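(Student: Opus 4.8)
The plan is to establish the formula by a backward-induction / value-function argument that parallels the sufficiency part of Theorem \ref{theorem2} and the induction in Lemma \ref{lemma3}, with the difference that the controls are now fixed at their optimal values $U_k^*,\tilde{u}_k^{L*}$ and the one-stage running cost is the risk term $(x_k^*-\mathbb{E}x_k^*)^\prime Q(x_k^*-\mathbb{E}x_k^*)$ rather than the full Lagrangian integrand. Concretely, I would introduce the candidate tail value (evaluated along the optimal trajectory)
\[
V_R(k)=\mathbb{E}\big\{x_k^\prime O_k\hat{x}_{k\mid k}^R+x_k^\prime P_k(\hat{x}_{k\mid k}^L-\hat{x}_{k\mid k}^R)+x_k^\prime W_k\mathbb{E}x_k\big\}+q_k
\]
and prove by downward induction on $k$ that $V_R(k)=\sum_{l=k}^{N+1}\mathbb{E}[(x_l^*-\mathbb{E}x_l^*)^\prime Q(x_l^*-\mathbb{E}x_l^*)]$. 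Setting $k=0$ then yields $V_R(0)=J_R(u^*(\mu))$, which is exactly the claimed identity.

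The base case $k=N+1$ is a direct computation. With $O_{N+1}=P_{N+1}=Q$, $W_{N+1}=-Q$ and $q_{N+1}=\mathrm{tr}(\Sigma_{N+1\mid N+1}^LQ)$, the first two bracketed terms collapse to $\mathbb{E}[x_{N+1}^\prime Q\hat{x}_{N+1\mid N+1}^L]=\mathbb{E}[(\hat{x}_{N+1\mid N+1}^L)^\prime Q\hat{x}_{N+1\mid N+1}^L]$ by the tower property, the $W_{N+1}$ term contributes $-(\mathbb{E}x_{N+1})^\prime Q\mathbb{E}x_{N+1}$, and by definition of the estimation-error covariance $q_{N+1}=\mathbb{E}[x_{N+1}^\prime Qx_{N+1}]-\mathbb{E}[(\hat{x}_{N+1\mid N+1}^L)^\prime Q\hat{x}_{N+1\mid N+1}^L]$. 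Adding these three pieces gives precisely $\mathbb{E}[(x_{N+1}-\mathbb{E}x_{N+1})^\prime Q(x_{N+1}-\mathbb{E}x_{N+1})]$, matching the $k=N+1$ tail.

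For the inductive step I would show $V_R(k)-V_R(k+1)=\mathbb{E}[(x_k^*-\mathbb{E}x_k^*)^\prime Q(x_k^*-\mathbb{E}x_k^*)]$. Here I would reuse the closed-loop recursions for $\hat{x}_{k+1\mid k+1}^{L*}$, $\hat{x}_{k+1\mid k+1}^{R*}$ and their difference already derived inside the proof of Lemma \ref{lemma3}, after substituting the optimal gains $U_k^*=-K_k\hat{x}_{k\mid k}^{R*}-\bar{K}_k\mathbb{E}\hat{x}_{k\mid k}^{R*}$, $\tilde{u}_k^{L*}=-\Lambda_k^{-1}L_k(\hat{x}_{k\mid k}^{L*}-\hat{x}_{k\mid k}^{R*})$ and the dynamics (\ref{new-system-form}). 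Expanding $V_R(k+1)$ in terms of the time-$k$ quantities and taking expectations, the mixed terms reduce through three standard facts: the orthogonality $\mathbb{E}[x_k^*-\hat{x}_{k\mid k}^{L*}\mid\mathcal{F}_k^L]=0$ together with the tower property; the independence of the Bernoulli variable $\eta_{k+1}$ (with $\mathbb{E}\eta_{k+1}=1-p$) from all time-$k$ data, which produces the convex $(1-p)/p$ weighting visible in the $P_k$ recursion; and the zero-mean independence of $\omega_k,v_{k+1}$, which turns the remaining second-order noise contributions into the trace terms collected in $q_k$. Matching the coefficients of $x_k^\prime\hat{x}_{k\mid k}^R$, of $x_k^\prime(\hat{x}_{k\mid k}^L-\hat{x}_{k\mid k}^R)$, of $x_k^\prime\mathbb{E}x_k$, and the constant part, against the running weight $Q$ then forces exactly the stated recursions for $O_k$, $P_k$, $W_k$ and $q_k$.

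The main obstacle is the algebraic bookkeeping in this inductive step rather than any conceptual difficulty: one must separate each expansion into the three orthogonal components $\hat{x}^R$, $\hat{x}^L-\hat{x}^R$, $\mathbb{E}x$, and track how the closed-loop maps $A-BK_k$, $A-B^L\Lambda_k^{-1}L_k$ and $A-BM_k^{-1}N_k$ act on each, while isolating the $\eta_{k+1}$-dependent and noise-dependent pieces that feed $q_k$. Note that the resulting recursions are not independent but triangular (cascade) in structure: $O_k$ closes on $O_{k+1}$ alone, after which $P_k$ and $W_k$ draw on $O_{k+1}$ in addition to their own predecessors, and $q_k$ accumulates the trace contributions. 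This is the same kind of calculation already carried out for $V(k)-V(k+1)$ in the proof of Theorem \ref{theorem2}, so I expect no new difficulty beyond keeping the many cross-terms organized; the cleanest route is to reduce everything to the component-wise Lyapunov-type identities before reading off the coefficient recursions.
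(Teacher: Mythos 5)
Your proposal is correct and follows essentially the same route as the paper: the authors omit the proof, stating only that it uses "the method of Theorem \ref{theorem2}," which is precisely the backward value-function/telescoping argument you outline (define a quadratic tail functional in the components $\hat{x}^R$, $\hat{x}^L-\hat{x}^R$, $\mathbb{E}x$, verify the terminal case, and match coefficients in $V_R(k)-V_R(k+1)$ against the one-stage risk term, with the $\eta$- and noise-averages feeding the $(1-p)/p$ weights and the trace recursion for $q_k$). Your explicit verification of the base case $k=N+1$ is a useful addition the paper does not spell out.
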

\begin{proof}
	The method proved here is similar to that in Theorem
	\ref{theorem2}; so it is omitted.
\end{proof}

So far, the closed-form solution of the considered optimal  risk-constrained controllers are obtained that are parameterized by $\mu^{*}$.  
According to the zero duality gap and $(\ref{13}) (\ref{optimal_J})$, the optimal cost can be calculated.

\section{Numerical Examples}
In this section, we give two numerical examples to verify the effectiveness of the proposed results.

 \textbf{Example 1}. Consider a dynamic system of form (\ref{system}) with one remote controller and one local controller whose parameters are given by  
\begin{align*}
	&\ A=\begin{bmatrix}
		4 & 1\\
		1 & 0.1
	\end{bmatrix},\quad
	B=\begin{bmatrix}
		1&1&1&0\\
		0&1&1&1
	\end{bmatrix},\quad
	B^L=\begin{bmatrix}
		1&1\\
		0&1
	\end{bmatrix},&\\
	&\ C=\begin{bmatrix}
		1&1\\
		0&-1
	\end{bmatrix},\quad
	Q=\begin{bmatrix}
		1&0\\0&1
	\end{bmatrix},\quad
		G=\begin{bmatrix}
		1&0\\0&1
	\end{bmatrix}, &\\
	&\ R=\begin{bmatrix}
		1&0&0&0\\
	0&1&0&0\\
	0&0&1&0\\
	0&0&0&1
	\end{bmatrix},\quad
	R^L=\begin{bmatrix}
		1&0\\
		0&1
	\end{bmatrix},\quad
	\Sigma_{init}=\begin{bmatrix}
		1&0\\
		0&1
	\end{bmatrix}.
\end{align*}
Let system noise $\omega_k$, observation noise $v_k$ and  initial state $x_0$ obey the Gaussian distribution:
	\begin{align*}
	\omega_k\sim \mathcal{N}\left(\begin{bmatrix}
		0\\0
	\end{bmatrix},\begin{bmatrix}
		10&0\\0&10
	\end{bmatrix}\right),~v_k\sim \mathcal{N}\left(\begin{bmatrix}
	0\\0
	\end{bmatrix},\begin{bmatrix}
	10&0\\0&10
	\end{bmatrix}\right),~x_0\sim \mathcal{N}\left(\begin{bmatrix}
	1\\1
	\end{bmatrix},\begin{bmatrix}
	1&0\\0&1
	\end{bmatrix}\right),~k=0,\cdots,50,
	\end{align*}
which are i.i.d.. 
Test Problem (FLQ) with $\mu=10$ and $\mu=0$ (the constraint-free case).


We run the MATLAB code and obtain 1000 sample trajectories of state, respective, for $\mu=10$ and $\mu=0$ with link failure probability $p=0.5$. In Figure 2 below, the trajectories of $\mathbb{E}\big(x_k-\mathbb{E}x_k\big)^{\prime}Q\big(x_k-\mathbb{E}x_k\big)$ are presented, where the red curve is the trajectory for $\mu=10$ and the green one is for $\mu=0$.  
It is clear that the state trajectory of constraint-free case has larger variability; in other words, by posing the constraint (\ref{constraint-0}) the obtained optimal state trajectory becomes flatter.


\begin{figure}[!htb] 
	\centering	\includegraphics[width=0.6\hsize]{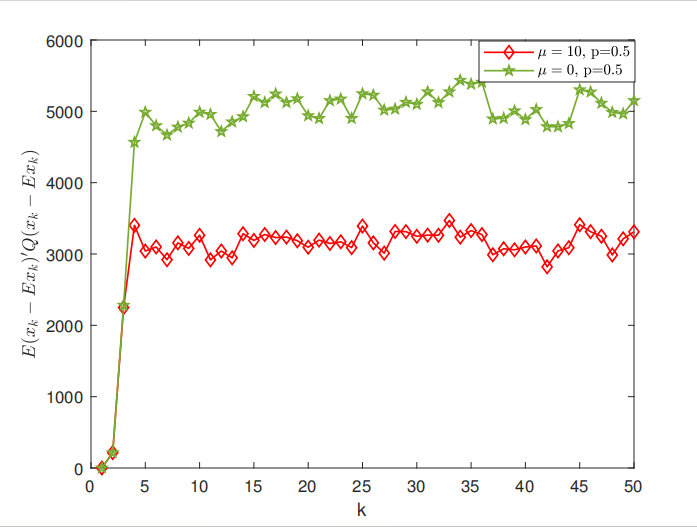}
	\caption{Evolution of $\mathbb{E}\big(x_k-\mathbb{E}x_k\big)^{\prime}Q\big(x_k-\mathbb{E}x_k\big)$ }
	\label{fig2}
\end{figure}

Then, we plot the trace trajectory of error covariance matrix $\Sigma_{k\mid k}^R$ of (\ref{es_x_R}) when the link failure probability takes different values.
As can be seen from Figure 3, the traces of error covariance of $p=0.2$ are smaller those of $p=0.8$.
\begin{figure}[!htb]
	\centering
	\includegraphics[width=0.6\hsize]{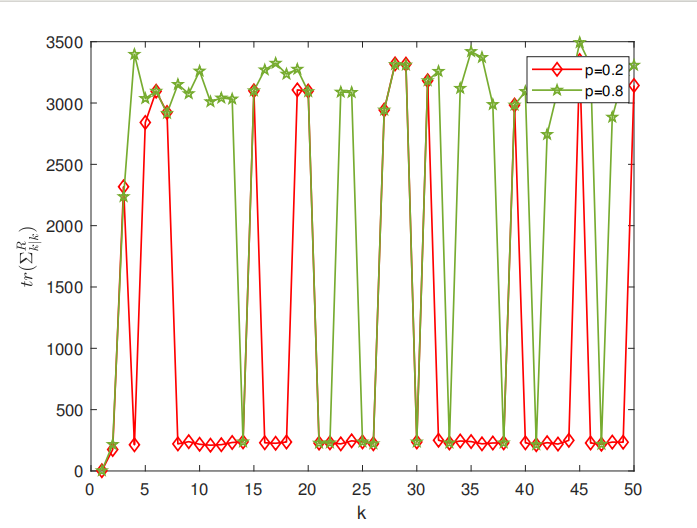}
	\caption{Traces of error covariance matrices}.
	\label{fig3}
\end{figure}

\textbf{Example 2}. Given the level $\epsilon$ of $(\ref{constraint-0})$, find the optimal multiplier $\mu^{*}$ by performing simple bisection on $\mu$.  
%
The parameters are given below
\begin{align*}
	&\ A=\begin{bmatrix}
		2 & 0.1\\
		1 & 0.1
	\end{bmatrix},\quad
	B=\begin{bmatrix}
		1&1&1&0\\
		0&1&1&1
	\end{bmatrix},\quad
	B^L=\begin{bmatrix}
		1&1\\
		0&1
	\end{bmatrix},&\\
	&\ C=\begin{bmatrix}
		1&1\\
		0&-1
	\end{bmatrix},\quad
	Q=\begin{bmatrix}
		1&0\\0&1
	\end{bmatrix},\quad
	G=\begin{bmatrix}
		1&0\\0&1
	\end{bmatrix},&\\
	&\ R=\begin{bmatrix}
		1&0&0&0\\
		0&1&0&0\\
		0&0&1&0\\
		0&0&0&1
	\end{bmatrix},\quad
	R^L=\begin{bmatrix}
		1&0\\
		0&1
	\end{bmatrix},\quad
	\Sigma_{init}=\begin{bmatrix}
		1&0\\
		0&1
	\end{bmatrix},~~ p=0.5.
\end{align*}
System noise $\omega_k$, observation noise $v_k$ and initial state $x_0$ obey the Gaussian distribution:
\begin{align*}
	\omega_k\sim \mathcal{N}(\begin{bmatrix}
		0\\0
	\end{bmatrix},\begin{bmatrix}
		1&0\\0&1
	\end{bmatrix}),~v_k\sim \mathcal{N}(\begin{bmatrix}
		0\\0
	\end{bmatrix},\begin{bmatrix}
		1&0\\0&1
	\end{bmatrix}),~x_0\sim \mathcal{N}(\begin{bmatrix}
		0\\0
	\end{bmatrix},\begin{bmatrix}
		1&0\\0&1
	\end{bmatrix}),~k=0,\cdots,5,
\end{align*}
which are i.i.d..
We set $\epsilon=40$. By using the method of bisection, we obtain that $\mu^{*}=6.25$, 
 and the function $(\ref{constraint-0})$ $J_R=38.65$.

\section{Conclusion}
To reduce the oscillation of system state, a risk constraint is posed on the cumulative state weighted variance of a partially-observed decentralized stochastic LQ problem with one remote controller and one local controller.
By punishing the risk constraint into the cost function through the Lagrange multiplier method, the resulting augmented cost function will include a quadratic mean-field term of state.
For fixed Lagrange multiplier $\mu$, explicit expressions of the optimal control strategy are obtained for the corresponding  finite-horizon and infinite-horizon optimal control problems together with the necessary and sufficient conditions for the system to be mean-square bounded.
Furthermore, by using  the bisection method, optimal Lagrange multiplier is computed. 
%

\bibliographystyle{plainnat}
\bibliography{JiaHui}

\end{document}